\numberwithin{equation}{section}
\newtheorem{lem}{Lemma}[section]
\newtheorem{thm}{Theorem}[section]
\newtheorem{obs}{Remark}[section]
\newcommand{\kk}{\kappa}
\newcommand{\dy}{\partial}
\newcommand{\ddt}[1]{\frac{\mathrm{d}{#1}}{\mathrm{d}{t}}}
\newcommand{\Zahl}{\mathbb{Z}}
\newcommand{\Real}{\mathbb{R}}
\newcommand{\ex}{\mathrm{e}}
\newcommand{\gb}{\nabla}
\newcommand{\aand}{\quad\textrm{and}\quad}
\newcommand{\tht}{\theta}
\newcommand{\ub}{u}
\newcommand{\kapz}{\kappa_0}
\newcommand{\kapu}{\bar\kappa}
\newcommand{\kapc}{\kappa_\tht}
\newcommand{\flux}{\Theta}
\newcommand{\avg}[1]{\langle{#1}\rangle}
\newcommand\lgl{\langle}
\newcommand\rgl{\rangle}
\newcommand\Lim{\mathop{\hbox{Lim}}}
\renewcommand\Pr{\mathrm{Sc}}
\newcommand\Sc{\mathrm{Sc}}
\newcommand\Q{g}
\newcommand\kz{\kappa_0}
\newcommand\kfup{\bar\kappa}
\newcommand\kfdn{\vphantom{\kz}\smash{\underset{\textstyle\bar{}}\kappa}}
\newcommand\kg{\kappa_g}
\newcommand\kgup{\kg}
\newcommand\keps{\kappa_\epsilon}
\newcommand\keta{\kappa_\eta}
\newcommand\ktheta{\kappa_\theta}
\newcommand\ktau{\kappa_\tau}
\newcommand\ksig{\kappa_\sigma}
\newcommand\kbeta{\kappa_\text{2D}}
\newcommand\kbetap{\kappa_\text{3D}}
\newcommand\kidn{{\underset{\textstyle\bar{}}\kappa}\vphantom{\kappa}_{\mathrm{i}}^{}}
\newcommand\cE{\mathcal{E}}
\newcommand\cP{\mathcal{P}}
\newcommand\cT{\mathcal{T}}
\newcommand\bfe{\mathsf{e}}
\newcommand\bfE{\mathsf{E}}
\newcommand\bft{\mathsf{\vartheta}}
\newcommand\me{{\mathfrak e}}
\newcommand\mek{{\mathfrak e}_\kappa}
\newcommand\mE{{\mathfrak E}}
\newcommand\mEk{{\mathfrak E}_\kappa}
\newcommand\uh{\hat u}
\begin{document}
\title[Passive tracer cascade]
{Energy spectra and passive tracer cascades\\in turbulent flows}
\author{M. S. Jolly$^{1,\dagger}$}
\address{$^1$Department of Mathematics\\
Indiana University\\ Bloomington, IN 47405}
\address{$\dagger$ corresponding author}
\author{D. Wirosoetisno$^{2}$}
\address{$^2$Department of Mathematics Sciences\\
Durham University\\ Durham U.K.\ \ DH1 3LE}
\email[M. S. Jolly]{msjolly@indiana.edu}
\email[D. Wirosoetisno]{djoko.wirosoetisno@durham.ac.uk}

\thanks{This work was supported in part by NSF grant number DMS-1418911 and Leverhulme Trust grant VP1-2015-036}

\date{\today}

\subjclass[2010]{
35Q30, %
76F02, % turbulence: fundamentals -- 76F05 is better fit?
76F25} % turbulence: transport, mixing
\keywords{Navier-Stokes equations, turbulence, enstrophy cascade}
\begin{abstract}
We study the influence of the energy spectrum on the extent of the
cascade range of a passive tracer in turbulent flows. The interesting
cases are when there are two different spectra over the potential
range of the tracer cascade (in 2D when the tracer forcing is in the
inverse energy cascade range,
and in 3D when the Schmidt number Sc is large). The extent of the tracer
cascade range is then limited by the width of the range for the
shallower of the two energy spectra. Nevertheless, we show that in
dimension $d=2,3$ the tracer cascade range extends (up to a logarithm)
to $\kappa_{d\text{D}}^{p}$, where $\kappa_{d\text{D}}$ is the
wavenumber beyond which diffusion should dominate and $p$ is arbitrarily
close to 1, provided Sc is larger than a certain power (depending on
$p$) of the Grashof number. We also derive estimates which suggest that
in 2D, for Sc${}\sim1$ a wide tracer cascade can coexist with a
significant inverse energy cascade at Grashof numbers large enough to
produce a turbulent flow.
\end{abstract}
\maketitle

%\section{R\'esum\'e}
\vspace{\baselineskip}

\normalsize

% ===========================================================================

\section{Introduction}

Passive tracers play an important role in the study of fluid motion.
On the one hand, experimental and observational studies of fluid flows rely heavily on passive tracers to deduce the advecting velocity field.
On the other hand, knowledge of the underlying fluid flows is essential to predict the future dispersion of tracers (particularly, but not exclusively, harmful ones).

It is natural to believe that if the advecting fluid flow is turbulent (however this is defined), the evolution of the tracer will be turbulent as well.
Following the pioneering work by Kolmogorov, Obukhov \cite{obukhov:49} and Corrsin \cite{corrsin:51} argued that if the energy spectrum of the fluid is $\cE(\kappa)=K\kappa^{-n}$, a passive tracer whose dissipation rate is $\chi$ should have the spectrum $\cT(\kappa)\sim\chi K^{-1/2}\kappa^{(n-5)/2}$ between the injection and dissipation scales.
Thus, in the inertial range in 3D, both the energy and tracer spectra scale as $\kappa^{-5/3}$.
Following Kraichnan \cite{Kr71}, in the direct enstrophy cascade range in 2D, the energy spectrum should scale as $\kappa^{-3}$, giving a $\kappa^{-1}$ tracer spectrum.
Although these scaling arguments were derived with little reference to the governing equations, they have been supported to a surprising extent by experimental and numerical works (cf.~\cite{davidson:t,Vallis}), primarily in 3D, slightly less so for 2D and still less so for tracers.

In 3D and 2D, respectively, dissipative effects are expected to dominate beyond the Kolmogorov and Kraichnan wavenumbers $\keps$ and $\keta$.
The corresponding scales for our tracer depend in addition on the Schmidt number $\Sc$, i.e.\ the ratio of the viscosity to the tracer dissipativity.
Another lengthscale of great importance is the Taylor microscale.
Initially (and to this day among experimentalists) defined using the velocity correlation, mathematicians prefer to use an alternate definition for $\ktau$ in terms of the energy and its dissipation rate \eqref{q:ktausigtht};
the two definitions can be shown to be (nearly) equivalent under some assumptions.
Assuming that $\ktau$ is much greater than the forcing scale, it has been proved rigorously that a direct energy cascade exists for solutions of 3D NSE \cite{FMRT}.
Similarly, in 2D one defines in terms of the enstrophy and its dissipation rate a wavenumber $\ksig$, which if sufficiently larger than the forcing scale rigorously implies the existence of direct enstrophy cascade \cite{FJMR}.
In section~\ref{s:i-tht}, we derive an analogous result for tracers in terms of a corresponding wavenumber $\ktheta$.

% Even though it may appear superficially
While it is plausible that $\ktau$, $\ksig$ and $\ktheta$ are large for turbulent solutions of the NSE and the advected tracers, these remain unproved (directly from the NS and the tracer equations) to this day.
If one were to assume the expected spectra, namely $\epsilon^{2/3}\kappa^{-5/3}$ and $\eta^{2/3}\kappa^{-3}$, however, it has been shown that $\ktau\sim\keps^{2/3}\kz^{1/3}$ in 3D \cite{DFJ5} and $\ksig\sim\keta$ up to a logarithm in 2D \cite{DFJ3}.
Following this approach, we prove the tracer analogues in sections \ref{2DTracer} and~\ref{3DTracer}.
There are a number of qualitatively distinct cases here, depending on the viscosity $\nu$ and tracer dissipativity $\mu$, as well as on the injection scales of energy $\kappa_f$ and of the tracer $\kg$.
When $\nu\gg\mu$, it is possible for $\ktheta$ to asymptotically approach (up to constants and logarithms) its largest possible value, in that $\ktheta\sim\kappa_{d\textrm{D}}^{1-r}\kz^r$ for any $r<1$, both for $d=2$ (\S\ref{2DTracer}) and $d=3$ (\S\ref{3DTracer}) in a periodic domain of volume $(2\pi/\kz)^d$.
When $\nu\sim\mu$, the situation is more complicated as discussed in detail below.

The rest of this paper is structured as follows.
After some mathematical setups in Section~\ref{prelim},
we recall the heuristic argument for the tracer spectra in Section~\ref{connect}.
Earlier NSE estimates for the enstrophy and energy transfer rates in terms of $\ksig$ and $\ktau$ in 2D and 3D are gathered in Section \ref{s:gen}, along with the implications that the expected energy spectra have on these wavenumbers, vis-\`a-vis $\keta$, $\keps$, respectively.
An analogous estimate for the tracer transfer rate in terms of $\ktheta$ is also derived in Section~\ref{s:gen}.
We treat 2D tracer flow in Section~\ref{2DTracer} and 3D tracer flow in Section~\ref{3DTracer}.
% \mnote{Djoko: I think it suffices to have the simpler 2D case, i.e., large Schmidt number, first, rather than switch 3D and 2D. This way, the tricky 2D stuff with the figures is postponed somewhat.  I have reordered things in Section 5, accordingly, and made minor adjustments in the proofs to acommodate.}

% ===========================================================================

\section{Preliminaries}\label{prelim}

We consider the evolution of a passive scalar $\theta$ under
a prescribed velocity field $u(x,t)$ and a time-independent source $\Q=\Q(x)$,
\begin{equation}\label{q:dcdt}\begin{aligned}
   \dy_t\tht - \mu\Delta\tht+ u\cdot\gb\tht  &= \Q\\
   \int_\Omega \theta \ dx = 0\;,  \qquad \int_\Omega g \ dx &= 0  \;,
\end{aligned}\end{equation}
with periodic boundary conditions in $\Omega =[0,L]^d$ for $d=2,3$.
We focus on the case where $u$ satisfies
 the incompressible Navier-Stokes equations
\begin{equation}\label{NSEphys}\begin{aligned}
   &\dy_t u - \nu \Delta u  + (u\cdot\nabla)u + \nabla p = F(t), \\
   &\nabla \cdot u = 0 \\
   &\int_{\Omega} u\,  dx =0 \;,\qquad \int_{\Omega} F\,  dx =0 \\
   &u(x,t_0) = u_0(x) \;.
\end{aligned}\end{equation}
We write \eqref{NSEphys} as
a differential equation in a certain Hilbert space $H$ (see \cite{CF88,T97}),
\begin{equation}\label{NSE}\begin{aligned}
   &\frac{d}{dt}u(t) + \nu Au(t) + B(u(t),u(t)) = f(t),\\
   &u(t) \in H,\quad
	t \ge t_0\quad
	\aand u(t_0)=u_0\,.
\end{aligned}\end{equation}
The phase space $H$ is the closure in $L^2(\Omega)^d$
of all  $\mathbb{R}^2$-valued trigonometric poly\-nomials $u$ such that
\[
   \nabla \cdot u = 0 \qquad \text{ and} \quad \int_{\Omega} u(x) \;dx =0.
\]
The bilinear operator $B$ is defined as
\[
   B(u,v)=\cP\left( (u \cdot \nabla) v \right),
\]
where $\cP$ is the Helmholtz--Leray orthogonal projector of $L^2(\Omega)^d$ onto $H$ and $f=\cP F$.
The scalar product in $H$ is taken to be
\[
   (u,v)=\int_{\Omega}u(x)\cdot v(x) \;dx,
%\quad \text{where} \quad a\cdot b=a_1b_1+a_2b_2,
\]
with the associated norm
\begin{equation}\label{L2norm}
   |u|=(u,u)^{1/2}=\Bigl(\int_{\Omega}u(x)\cdot u(x) \;dx\Bigr)^{1/2}.
\end{equation}
The operator $A=-\Delta$ is self-adjoint, and its eigenvalues are of the form
\[
   ({2\pi}/{L})^2 k\cdot k
	\quad \text{where }k \in \Zahl^d \setminus \{ 0\}.
\]
We denote these eigenvalues by
\[
   0<\lambda _0=\left(2\pi/ L\right)^2 \le \lambda _1\leq\lambda_2 \le \cdots
\]
arranged in non-decreasing order (counting multiplicities)
and write $w_0$, $w_1$, $w_2,\cdots$, for the corresponding normalized
eigenvectors (i.e.\ $|w_j|=1$ and $Aw_j=\lambda_j w_j$ for $j=0,1,2,\cdots$).

For $\alpha\in\Real$, the positive roots of $A$ are defined by linearity from
\[
   A^{\alpha}w_j= \lambda_j^{\alpha} w_j, \quad \text{for} \ j=0,1,2, \cdots
\]
on the domain
\[
   D(A^{\alpha})=\Bigl\{u \in H: \sum_{j=0}^{\infty} \lambda_j^{2\alpha}(u,w_j)^2 < \infty\Bigr\}.
\]
We take the natural norm on $V=D(A^{1/2})$ to be
\begin{equation}\label{Vnorm}
\|u\|=|A^{1/2}u|=\left(\int_{\Omega}\sum_{j=1}^{d}
\frac{\partial}{\partial x_j}u(x)\cdot
              \frac{\partial}{\partial x_j}u(x)  \;dx\right)^{1/2}
=\left(\sum_{j=0}^{\infty} \lambda_j(u,w_j)^2 \right)^2.
\end{equation}
Since the boundary conditions are periodic,
we may express an element in $H$ as a Fourier series
\begin{equation}\label{Fourier}
u(x)=\sum_{k\in \mathbb{Z}^d}\uh_k e^{i\kappa_0 k\cdot x}\;,
\end{equation}
where
\begin{equation}\label{k0def}
\kappa_0=\lambda_0^{1/2}=\frac{2\pi}{L}, \quad \uh_0=0, \quad
\uh_k^*=\uh_{-k}
\end{equation}
and due to incompressibility,
$k \cdot \uh_k=0$.
We associate to each term in  \eqref{Fourier} a {\it wavenumber} $\kappa_0|k|$.
Parseval's identity reads as
$$
|u|^2=L^d\sum_{k\in \Zahl^d} \uh_k\cdot \uh_{-k} = L^d\sum_{k\in \Zahl^d} |\uh_k|^2.
$$
% (it should be clear from context when $|\cdot|$ denotes
% the length of a vector in $\Cplx^d$).

Two important dimensionless parameters are the Grashof and Schmidt numbers,
\begin{equation}\label{Grashof}
   G := \frac{|f|}{\nu^2\kz^{3-d/2}}
   \aand
   \Sc := \frac{\nu}{\mu}.
\end{equation}
The former indicates the complexity of the (velocity) flow, and the latter the importance of (momentum) viscosity relative to tracer dissipativity.

We define the average {\it energy\/}, {\it enstrophy\/} and {\it tracer variance\/}
\begin{equation}\label{quantities1}
   \bfe=\frac{1}{L^d}\lgl |u|^2\rgl\,, \qquad
   \bfE=\frac{1}{L^d}\lgl \|u\|^2\rgl \aand
   \frac{1}{L^d}\lgl |\tht|^2\rgl,
\end{equation}
as well as their dissipation (diffusion) rates
\begin{equation}\label{quantities2}
   \epsilon := \frac{\nu}{L^d}\lgl \|u\|^2\rgl\,, \qquad
   \eta := \frac{\nu}{L^d}\lgl |Au|^2\rgl \aand
   \chi := \frac{\mu}{L^d}\lgl |\nabla\tht|^2\rgl.
\end{equation}
%In turbulence theory, to be consistent with \eqref{energyeq},
%\eqref{enstrophyeq}, the actual
%mean energy is $\bfe/2$,  and the actual enstrophy dissipation
%rate is $2\eta$.  To simplify the presentation we omit these factors
%of 2 in \eqref{quantities1},   \eqref{quantities2}.
By the classical dimensional arguments, the dissipation range is expected to start at
\begin{equation}\label{ketadef}
   \keps= \left(\frac{\epsilon}{\nu^3}\right)^{1/4}
   \aand
   \kappa_{\eta}=\left(\frac{\eta}{\nu^3}\right)^{1/6},
\end{equation}
in 3D and 2D respectively;
these are sometimes known as the Kolmogorov and Kraichnan wavenumbers.
Their analogues for the tracer cascade are more complicated and depend on the advecting velocity; see $\kbeta$ and $\kbetap$ in \S\ref{2DTracer}--\S\ref{3DTracer} below.
Another set of important wavenumbers are
\begin{equation}\label{q:ktausigtht}
   \ktau^2 := \frac{\lgl\|u\|^2\rgl}{\lgl|u|^2\rgl},\quad
   \ksig^2 := \frac{\lgl|\Delta u|^2\rgl}{\lgl\|u\|^2\rgl} \aand
   \ktheta^2 := \frac{\lgl\|\theta\|^2\rgl}{\lgl|\theta|^2\rgl}.
\end{equation}
In 3D turbulence, $\ktau$ is closely related to the Taylor wavenumber, the scale at which the velocity correlation is lost; it has been shown that direct energy cascade takes place within the range $(\kfup,\ktau)$.
Its analogues in 2D and tracer turbulence are $\ksig$ and $\ktheta$, with corresponding results on enstrophy \cite{FJMR} and tracer [\eqref{directenergy-theta} below] cascades.

% We assume that the scales where the viscous term of the NSE dominates are well beyond those of the force.

\medskip
We make use of the following notation:
$a\lesssim b$ means $a\le c\,b$ for a nondimensional universal constant $c$, independent of $G$ and $\Pr$, {\it under the condition that} $G \ge G_*$ where $G_*$ may be different for each inequality, and similarly for $\gtrsim$\,.
By $a\sim b$ we mean that both $a\lesssim b$ and $b\lesssim a$ hold.
% For convenience, we will in some instances write
%\begin{equation}\label{leLdef}
%a \leL b  \quad  \text{when} \quad a \le C(\log (s\bG))^{\alpha}b
%\quad \text{for some}\ \alpha\in\bR\;, \ \text{and large enough} \ \bG
%\end{equation}
%and where $C$ and $s$
%are shape factors, with a similar convention for $\geL$.
We write
$a\ll b$ if $a/b < \delta$  for some small $\delta \in (0,1)$, and
$a/b$ is nondimensional provided the ranges of $a, b$ are a priori specified (e.g., for large values of $a, b$).
The value of $\delta$ shall remain unspecified, and may vary from one
statement involving $\ll$ to the next.

Since the infinite time limit is not known to exist, for each solution $u(t)$ of the 2D NSE (Leray--Hopf weak solution in the 3D case) we work with the average
\[
   \lgl\Phi\rgl = \Lim\limits_{T\to\infty}\frac{1}{T}\int_0^T\Phi(u(t))\,dt
   \quad\mbox{for any}\ \Phi\ \mbox{weakly continuous in }H,
\]
where $\Lim$ is a Hahn--Banach extension of the classical time limit.
The average $\lgl\cdot\rgl$ is the mathematical equivalent
of the ensemble average in the statistical theory of turbulence.
See \cite{FJMR,FMRT} for more details.
% \mnote{DJOKO:  A point to finesse here is that we also consider $\lgl |\theta| \rgl$,  $\lgl |\nabla\theta| \rgl$.  This might be explained in terms of $\Phi(u)=|\theta|$  being continuous in $u$. I have tried to avoid here referring to the invariant measure $\mu_u$, in part to avoid notation confusion with diffusivity coefficient $\mu$}

% ===========================================================================

\section{Influence of energy spectrum}\label{connect}

\subsection{Classical theory}

We recall briefly from \cite[Ch.~8]{Vallis} some elements of the Kolmogorov--Obukhov theory for 3D turbulence in a form suitable for its extension to passive tracers.
Suppose that a parcel (``eddy'') of size $1/\kk$ has velocity $U_\kk\sim [\kk\cE(\kk)]^{1/2}$.
Assuming that such an eddy breaks up in the time $\tau_\kk$ it takes to travel its own size, i.e.
\begin{equation}\label{totem}
   \tau_\kk U_\kk=1/\kk
   \quad \text{so that} \quad
   \tau_\kk\sim[\kk^3\cE(\kk)]^{-1/2},
\end{equation}
the resulting downscale energy transfer rate is
\begin{equation}\label{spec1}
  \frac{U_\kk^2}{\tau_\kk} \sim \frac{\kk\cE(\kk)}{\tau_\kk}.
\end{equation}
Assuming that this transfer rate is a constant $\epsilon$ for $\kk$ in the so-called inertial range and solving for $\cE$, we arrive at the Kolmogorov spectrum,
\begin{equation}\label{spec3D}
   \cE_{\text{3D}}(\kk)\sim \epsilon^{2/3}\kk^{-5/3}.
\end{equation}

The situation in 2D is more complicated in that, for scales smaller than the forcing, we expect the enstrophy to undergo a direct cascade to smaller scales, while energy is mainly transferred to larger scales in an inverse cascade for scales larger than the forcing.
Yet a similar dimensional argument in the enstrophy inertial range leads to the Kraichnan spectrum
\begin{equation}\label{spec2D}
   \cE_{\text{2D}}(\kk)\sim \eta^{2/3}\kk^{-3}\;.
\end{equation}

An analogous cascade mechanism for the tracer suggests a connection between its spectrum $\cT(\kk)$ and the energy spectrum.
Taking the amount of tracer (variance) at wavenumber $\kk$ to be $\kk\cT(\kk)$, assuming that it is transferred to wavenumber $2\kk$ by the advecting velocity over a time $\tau_\kk$ given by \eqref{totem}, and setting the transfer rate to a constant $\chi$, we find
\begin{equation}\label{spec3}
  \chi \sim \frac{\kk\cT(\kk)}{\tau_\kk}.
\end{equation}
If we take $\cE(\kk) \sim K \kk^{-n}$ in \eqref{totem} and solve for $\cT$ in \eqref{spec3}, we have
\begin{equation}\label{specT}
   \cT(\kk)\sim \chi K^{-1/2} \kk^{(n-5)/2}.
\end{equation}

\subsection{Mathematical formulation}

These spectral relations can be reformulated in terms of partial sums
\begin{equation}\label{partialsum}
   \bfe_{\kk,2\kk} := \frac{1}{L^d}\sum_{\kk \le \kz |k| < 2\kk} \lgl |\uh_k|^2  \rgl
   \aand
   \bft_{\kk,2\kk} := \frac{1}{L^d}\sum_{\kk \le \kz |k| < 2\kk} \lgl  |\hat{\theta}_k|^2 \rgl.
\end{equation}
% where in each case the summation is over wave vectors satisfying $\kk \le \kz |k| < 2\kk$.
As $L$ increases (so $\kz$ decreases), each quantity in \eqref{partialsum} can be viewed as a Riemann sum approximation of the integral of the corresponding spectrum (this assumes smoothness of the summands, but below we will use this approximation only for explicit functions of $\kk$).
For instance, for the energy in 3D, we have
\[
   \int_{\kk}^{2\kk} \cE_{\text{3D}}(\tilde{\kk}) \;d\tilde{\kk}
	\sim \int_{\kk}^{2\kk} \epsilon^{2/3} \tilde{\kk}^{-5/3} \;d\tilde{\kk}
	= \frac{3}{2} \epsilon^{2/3} \bigl(1-2^{-2/3}\bigr)\kk^{-2/3}
	\sim \epsilon^{2/3} \kk^{-2/3}.
\]
This leads to the energy power law
\begin{equation} \label{power23}
  \bfe_{\kk,2\kk}\sim \epsilon^{2/3} \kk^{-2/3} \quad \text{in 3D}
\end{equation}
and similarly
\begin{equation} \label{power2}
  \bfe_{\kk,2\kk}\sim \eta^{2/3} \kk^{-2} \quad \text{in 2D}.
\end{equation}
We gather the expected spectra according to classical theory in Table~\ref{table1}.

\begin{table}\begin{center}
\begin{tabular}{|c|c||c|c|c|c|c|} \hline
dir. & $d\vphantom{\Big|}$&  $\cE(\kk)$ &  $\bfe_\kk$    & $\cT(\kk)$ & $\bft_\kk$\\ \hline \hline
fwd &  3 \vphantom{\Big|}&	$\epsilon^{2/3} \kk^{-5/3}$ & $\epsilon^{2/3} \kk^{-2/3}$ &	$\chi\epsilon^{-1/3} \kk^{-5/3}$ & $\chi\epsilon^{-1/3} \kk^{-2/3}$\\  \hline
fwd &  2 \vphantom{\Big|}&	$\eta^{2/3} \kk^{-3}$ & $\eta^{2/3} \kk^{-2}$ &	$\chi\eta^{-1/3} \kk^{-1}$ & $\chi\eta^{-1/3}$\\  \hline
bkwd &  2 \vphantom{\Big|}&	$\epsilon^{2/3} \kk^{-5/3}$ & $\epsilon^{2/3} \kk^{-2/3}$ &	$\chi\epsilon^{-1/3} \kk^{-5/3}$ & $\chi\epsilon^{-1/3}\kk^{-2/3}$\\  \hline
\end{tabular}
\end{center}\caption{Spectra according to classical theory}\label{table1}
\end{table}

We conclude this section with a brief calculation regarding the summation of the tracer variance over the relevant wavenumber range assuming that a certain power law holds.  It will be used repeatedly.

\begin{lem}\label{sumlem}
Suppose $\bft_{\kk,2\kk} \sim \alpha\kk^{-p}$ for $\kk_1 \le \kk \le \kk_2$, with $4\kk_1 \le \kk_2$ and $p\ge 0$.
Then
\begin{subnumcases}
   {\bft_{\kk1,\kk2} \sim}
	\alpha\left(\kk_1^{-p}-\kk_2^{-p}\right)  &if $p>0$, \label{sum1}\\
	\alpha\ln(\kk_2/\kk_1) &if $p=0$. \label{sum2}
\end{subnumcases}
\end{lem}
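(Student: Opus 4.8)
The plan is to reduce the sum over the whole range to a geometric series over dyadic shells, on each of which the hypothesis applies verbatim. Setting $N=\nk{\log_2(\kk_2/\kk_1)}$, I would first split
\[
   \bft_{\kk_1,\kk_2} = \sum_{j=0}^{N-1}\bft_{2^j\kk_1,\,2^{j+1}\kk_1} + \bft_{2^N\kk_1,\,\kk_2},
\]
in which the first $N$ terms are genuine dyadic shells obeying $\bft_{2^j\kk_1,2^{j+1}\kk_1}\sim\alpha(2^j\kk_1)^{-p}$, while the last is a leftover block with $2^N\kk_1\le\kk_2<2^{N+1}\kk_1$. The role of the standing hypothesis $4\kk_1\le\kk_2$ is exactly to force $N\ge2$, so that there are at least two clean shells and neither the leftover block nor the rounding in $N$ can affect the leading order.

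For \eqref{sum1} (the case $p>0$) I would factor out $\alpha\kk_1^{-p}$ and recognise $\sum_{j}2^{-jp}$ as a geometric series with ratio $2^{-p}\in(0,1)$, summing to $(1-2^{-Np})/(1-2^{-p})$. Bounding the leftover block by the corresponding full shell gives the upper estimate $\bft_{\kk_1,\kk_2}\lesssim\alpha\kk_1^{-p}/(1-2^{-p})$, and since $\kk_2\ge4\kk_1$ yields $\kk_1^{-p}-\kk_2^{-p}\ge(1-4^{-p})\kk_1^{-p}$, this is $\lesssim\alpha(\kk_1^{-p}-\kk_2^{-p})$. For the matching lower bound I would keep only the $N$ full shells and use the geometric-sum identity together with $2^N\kk_1>\kk_2/2$ and $\kk_2\ge4\kk_1$ to verify $\kk_1^{-p}-(2^N\kk_1)^{-p}\gtrsim\kk_1^{-p}-\kk_2^{-p}$, which delivers $\bft_{\kk_1,\kk_2}\gtrsim\alpha(\kk_1^{-p}-\kk_2^{-p})$. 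The prefactor $(1-2^{-p})^{-1}$ is a constant independent of $G$ and $\Pr$ for each fixed $p$, as the $\sim$ convention permits.

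For \eqref{sum2} (the case $p=0$) the hypothesis reads $\bft_{\kk,2\kk}\sim\alpha$ on every shell, so the sum becomes essentially a count: the $N$ full shells contribute $\sim N\alpha$ and the leftover block at most $O(\alpha)$. Because $4\kk_1\le\kk_2$ gives $\tfrac12\log_2(\kk_2/\kk_1)\le N\le\log_2(\kk_2/\kk_1)$, we obtain $\bft_{\kk_1,\kk_2}\sim\alpha\log_2(\kk_2/\kk_1)\sim\alpha\ln(\kk_2/\kk_1)$.

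The computation is elementary, and I do not expect a genuine obstacle; the only care needed is bookkeeping at the two ends. The one mildly delicate point is the non-dyadic right endpoint: since $\kk_2$ need not equal $2^N\kk_1$, one must check that the partial shell and the floor in $N$ remain subdominant, which is precisely what $4\kk_1\le\kk_2$ secures by keeping $N\ge2$. The same hypothesis keeps the $p$-dependent constants bounded away from the degeneracy $1-2^{-p}\to0$ as $p\to0^+$, which is what dictates the split into the two cases.
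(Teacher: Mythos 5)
Your proof is correct and follows essentially the same route as the paper's: a dyadic decomposition into shells $[2^j\kk_1,2^{j+1}\kk_1)$ followed by summing the resulting geometric (respectively constant) series, the only cosmetic difference being that the paper takes $J=\lfloor\log_2(\kk_2/\kk_1)\rfloor-1$ instead of tracking the leftover partial block explicitly. One small inaccuracy in your closing remark: the hypothesis $4\kk_1\le\kk_2$ does not keep the implied constants away from the degeneracy $1-2^{-p}\to0$ --- they do degenerate as $p\to0^+$, which is precisely why $p=0$ is stated as a separate case; for each fixed $p>0$ this is harmless under the paper's $\sim$ convention.
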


\begin{proof}
As in \cite{DFJ3,DFJ5}, let $J=\lfloor \log_2(\kk_2/\kk_1) \rfloor -1$.
If $p>0$, then
\begin{align*}
\bft_{\kk_1,\kk_2}\sim \sum_{\kk=2^j\kk_1,\,j=0}^J \bft_{\kk,2\kk} \sim
 \frac{\alpha}{\kk_1^{p}}\sum_{j=0}^J (2^{p})^{-j}  &=
  \frac{\alpha}{\kk_1^{p}} \frac{1}{1-2^{-p}}\left[1-(2^{-J})^{-p}\right] \\
  &\sim \frac{\alpha}{\kk_1^{p}} \left[1-\left(\frac{\kk_1}{\kk_2}\right)^p\right].
\end{align*}
If $p=0$,
\[
  \bft_{\kk1,\kk2}\sim  \alpha\sum_{j=0}^J 1= \alpha \log_2(\kk_2/\kk_1) \sim \alpha \ln (\kk_2/\kk_1).
\]\end{proof}

% ===========================================================================

\section{Indicators for cascades}\label{s:gen}

Returning to the Navier--Stokes \eqref{NSE} and tracer equations \eqref{q:dcdt},
we henceforth assume that the forcing functions $F$ and $\Q$ are spectrally-bounded, i.e.\ there exist $\kz<\kgup<\infty$ and $\kz\le\kfdn\le\kfup<\infty$ such that
% \mnote{Mike: I removed $\bar\kappa_g$; if $\kfdn=\kfup$, then $f=0$ ...}
\begin{equation}
   \Q = \Q_{\kz,\kgup}^{} \aand f=f_{\kfdn,\kfup}.
\end{equation}
Given a fixed $\kappa$, we define
\begin{equation}
   u^<:= u _{\kapz,\kappa}\,,   \quad  u^> := u_{\kappa,\infty}
   \aand
   \tht^< := \tht_{\kapz,\kappa} \,, \quad  \tht^> := \tht_{\kappa,\infty}.
\end{equation}

\subsection{Navier--Stokes equations}
We start by giving sufficient conditions for enstrophy and energy cascades.
In terms of the solution of the 2D NSE, the {\it net rate of enstrophy transfer (flux)} is given by $\mE_\kappa= \mE_\kappa^{\rightarrow} - \mE_\kappa^{\leftarrow}$
where
\[
   \mE^{\rightarrow}_\kappa(u) = -\frac{1}{L^2}(B(u^<,u^<),Au^>)
   \aand
   \mE^{\leftarrow}_\kappa(u) = -\frac{1}{L^2}(B(u^>,u^>),Au^<).
\]
are the {\it rates of enstrophy transfer} (low to high) and (high to low), respectively.
It was shown in \cite{FJMR} that
\begin{equation}\label{cascrel}
   1 - \Bigl(\frac{\kappa}{\ksig}\Bigr)^2
	\le \frac{\lgl \mEk \rgl}{\eta}  \le 1
	\quad\text{if}\quad \kfup \le \kappa \le\ksig.
\end{equation}
It follows that if
\begin{equation}\label{ksigkf}
   \ksig \gg \kfup,
\end{equation}
then there exists an {\it enstrophy cascade}:
\begin{equation}\label{enstcascrel}
   \lgl \mEk \rgl \approx \eta \quad \text{for} \quad
   \kfup \le \kappa \ll \ksig \,.
\end{equation}

Similarly, in both 2D and 3D (using Leray--Hopf solutions for the latter) the transfer of energy $\me_\kappa =  \me_\kappa^{\rightarrow} - \me_\kappa^{\leftarrow}$ is shown in \cite{FJMR,FMRT} to satisfy
\begin{equation}\label{directenergy}
   1-\Bigl(\frac{\kappa}{\ktau}\Bigr)^2
	\le \frac{\lgl \me_\kappa \rgl}{\epsilon}
	\le 1
	\quad\text{for}\quad
   \kfup \le \kappa \le \ktau,
\end{equation}
where
\[
   \me^{\rightarrow}_\kappa(u) = -\frac{1}{L^d}(B(u^<,u^<),u^>)
   \aand
   \me^{\leftarrow}_\kappa(u) = -\frac{1}{L^d}(B(u^>,u^>),u^<).
\]
Thus if
\begin{equation}\label{ktaukf}
   \ktau \gg \kfup
\end{equation}
there is a direct energy cascade:
\begin{equation}\label{encascrel}
   \lgl \mek \rgl \approx \epsilon \quad \text{for} \quad
   \kfup \le \kappa \ll \ktau \,.
\end{equation}
It is easy to show that $\ktau \le \ksig$, which is consistent with the expectation that for a 2D flow a direct enstrophy cascade be more pronounced than a direct energy cascade.

We note a couple of useful bounds for $\keta$ and $\keps$.
For the 2D NSE (regardless of whether the flow is turbulent), it was shown in \cite{FMT93} that
\begin{equation}\label{FMTest}
   G^{1/6} \lesssim {\keta}/{\kz} \le G^{1/3}.
\end{equation}
While for the 3D NSE, \cite{DFJ5} showed that
% \mnote{Mike: isn't the condition superfluous since we're using $\lesssim$?}
\begin{equation}\label{gen3D}
   \bigl({\kz}/{\kfup}\bigr)^{5/8} G^{1/4} \lesssim \frac{\keps}{\kz}
%	\qquad\text{for all}\quad G \gtrsim \bigl({\kfup}/{\kz}\bigr)^{3/2}.
\end{equation}
As noted in the introduction, at present we do not have independent lower bounds on $\ksig$ and $\ktau$ (other than the trivial, and useless, ones).

\medskip
If one assumes the power spectrum (which {\em a priori\/} says nothing about energy transfer rates), however, one does obtain lower bounds on $\ksig$ and $\keta$, or equivalently by \eqref{ksigkf} and \eqref{ktaukf}, sufficient conditions for the enstrophy and energy cascades.
In 2D we have the following estimate from \cite{DFJ3}.
\begin{thm}\label{2Dconnect}
If for the 2D NSE we have
\begin{equation}\label{law11}
   \bfe_{\kk,2\kk} \sim \eta^{2/3}\kk^{-2}
	\quad \text{for} \quad \kidn \le \kk \le \keta
\end{equation}
with $4\kidn \le \keta$ and
\begin{equation}\label{bulk}
   \lgl \|u_{\kz,\kidn}\|^2\rgl \lesssim \lgl \| u_{\kidn,\infty}\|^2 \rgl \;,
\end{equation}
then
\begin{equation}\label{ksigbig}
   \ksig^2 \sim \keta^2 / \ln(\keta/\kidn).
\end{equation}
\end{thm}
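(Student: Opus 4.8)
The plan is to evaluate separately the two averages in the definition \eqref{q:ktausigtht} of $\ksig^2$, namely the palinstrophy $\lgl|\Delta u|^2\rgl=\lgl|Au|^2\rgl$ in the numerator and the enstrophy $\lgl\|u\|^2\rgl$ in the denominator, and then divide. Since $\ksig^2$ is a quotient of two averages carried out in the same norm, the common $L$-normalization will cancel, so I need only track the scaling in $\eta$, $\nu$ and $\keta$.

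The first observation is that the numerator needs no spectral hypothesis at all: it is fixed by the definitions \eqref{ketadef} and \eqref{quantities2}. Indeed $\eta=(\nu/L^2)\lgl|Au|^2\rgl$ together with $\keta^2=(\eta/\nu^3)^{1/3}=\eta^{1/3}/\nu$ gives the identity $\lgl|Au|^2\rgl=L^2\eta/\nu=L^2\eta^{2/3}\keta^2$. Thus the entire content of the theorem lies in showing $\lgl\|u\|^2\rgl\sim L^2\eta^{2/3}\ln(\keta/\kidn)$, after which \eqref{ksigbig} follows immediately on dividing.

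For the denominator the key is that the assumed law \eqref{law11} makes the enstrophy essentially \emph{constant per octave}. Writing $\bfE_{\kk,2\kk}$ for the enstrophy carried by the shell $\kk\le\kz|k|<2\kk$ and using $\kz^2|k|^2\sim\kk^2$ there, one has $\bfE_{\kk,2\kk}\sim\kk^2\bfe_{\kk,2\kk}\sim\eta^{2/3}$ for $\kidn\le\kk\le\keta$. This is the $p=0$ situation, so the analogue of Lemma~\ref{sumlem} with $\bft$ replaced by the enstrophy shells yields $\bfE_{\kidn,\keta}\sim\eta^{2/3}\ln(\keta/\kidn)$ (the hypothesis $4\kidn\le\keta$ keeps the logarithm bounded below by a positive constant). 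It then remains to see that the modes outside $[\kidn,\keta]$ do not change this order. The low modes are absorbed directly by the bulk assumption, which reads $\bfE_{\kz,\kidn}\lesssim\bfE_{\kidn,\infty}$ in the notation of \eqref{bulk}. For the high modes I would use $\kz^2|k|^2\le\keta^{-2}\kz^4|k|^4$ when $\kz|k|\ge\keta$ to bound $\bfE_{\keta,\infty}\le\keta^{-2}(1/L^2)\lgl|Au|^2\rgl\sim\keta^{-2}\eta^{2/3}\keta^2=\eta^{2/3}$ via the palinstrophy identity above; since the power-law range spans several octaves, this is $\lesssim\eta^{2/3}\ln(\keta/\kidn)$ and hence subdominant. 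Collecting the three pieces gives $\lgl\|u\|^2\rgl\sim L^2\bfE_{\kidn,\keta}\sim L^2\eta^{2/3}\ln(\keta/\kidn)$, and the ratio with the numerator produces \eqref{ksigbig}.

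The main obstacle is the tail control rather than the summation. The lower bound $\lgl\|u\|^2\rgl\gtrsim L^2\eta^{2/3}\ln(\keta/\kidn)$ is immediate from the power law on the range, but the matching upper bound requires both tails to be dominated by the logarithmic bulk: the sub-$\kidn$ contribution is exactly what \eqref{bulk} is designed to absorb, while the super-$\keta$ contribution must be shown to amount to only a single octave's worth of enstrophy, which is precisely where the self-consistent palinstrophy identity (forcing $\bfE_{\keta,\infty}\lesssim\eta^{2/3}$) is essential. One should also verify that the shell approximation $\bfE_{\kk,2\kk}\sim\kk^2\bfe_{\kk,2\kk}$ costs only universal constants, so that every $\sim$ survives the final division.
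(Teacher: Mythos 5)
Your argument is correct and is essentially the standard one: the paper does not reproduce a proof of Theorem~\ref{2Dconnect} (it is quoted from \cite{DFJ3}), but your route --- pinning the numerator exactly via $\lgl|Au|^2\rgl=L^2\eta/\nu=L^2\eta^{2/3}\keta^2$ from \eqref{quantities2} and \eqref{ketadef}, then evaluating the denominator by octave summation of $\kk^2\bfe_{\kk,2\kk}\sim\eta^{2/3}$ with the low tail absorbed by \eqref{bulk} and the high tail bounded by $\keta^{-2}\lgl|Au|^2\rgl\sim L^2\eta^{2/3}$ --- is exactly the mechanism behind the cited result. All the steps check out, including the use of $4\kidn\le\keta$ to keep $\ln(\keta/\kidn)$ bounded below so the super-$\keta$ contribution is subdominant.
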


\noindent
The wavenumber $\kidn$ marks the start of the inertial range.
Based on \eqref{cascrel} and \eqref{directenergy}, we expect that $\kidn \sim \kfup$.

Thanks to \eqref{FMTest},
% a bound in \cite{FMT93} for the 2D NSE
% \begin{equation}\label{FMTest}
%    G^{1/6} \lesssim \frac{\keta}{\kz} \le G^{1/3},
% \end{equation}
the dissipation wavenumber $\keta$ can be controlled by the Grashof number.
Thus, under \eqref{law11}, $\ksig$ can indeed be made large by increasing $G$.
% The bounds in \eqref{FMTest} are for a general solution of the 2D NSE
% (regardless of whether the flow is turbulent).
It is shown in \cite{FJM2} that if conversely \eqref{ksigbig} holds, then
one side of the power law holds (up to a log)
% \mnote{Djoko: I think we also did this without finite time averages, but I have yet to track it down.}
\begin{equation}\label{law111}
\bfe_{\kk,2\kk} \lesssim \eta^{2/3}\kk^{-2}\ln({\keta}/{\kidn}) \quad \text{for} \quad \kidn \le \kk \le \keta\;.
\end{equation}
Moreover, under \eqref{ksigbig} it is shown in \cite{DFJ3} that \eqref{FMTest} is sharpened to
\begin{equation}\label {ketasharp}
   \left(\frac{\kz}{\kfup}\right)^{1/4} \frac{G^{1/4}}{(\ln G)^{1/4}}
	\lesssim \frac{\keta}{\kz}
	\lesssim \Bigl(\frac{\kfup}{\kz}\Bigr)^{1/4} G^{1/4}(\ln G)^{1/8}
%	\qquad \forall \ G \gtrsim \Bigl(\frac{\kfup}{\kz}\Bigr)^2 \;.
\end{equation}

The following 3D analogue of Proposition \ref{2Dconnect} is proved in \cite{DFJ5}.
\begin{thm}\label{3Dconnect}
If for a Leray--Hopf solution to the 3D NSE, we have
\begin{equation*}%\label{23skidoo}
   \bfe_{\kk,2\kk} \sim \epsilon^{2/3}\kk^{-2/3} \quad \text{for} \quad \kfup\le \kk \le \keps
\end{equation*}
with $4\kfup \le \keps$ and
\begin{equation*}%\label{bulk}
   \lgl |u|^2\rgl \sim \lgl | u_{\kfup,\keps}|^2 \rgl \;,
\end{equation*}
then
\begin{equation}\label{ktaubig}
   \ktau^3 \sim \keps^2 \kfup\,.
\end{equation}
\end{thm}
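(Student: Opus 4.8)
The plan is to evaluate separately the two averages in the definition $\ktau^2=\lgl\|u\|^2\rgl/\lgl|u|^2\rgl$, or equivalently to compute the ratio $\ktau^2=\bfE/\bfe$ of the enstrophy and energy densities, and then translate the result into the stated form using the Kolmogorov wavenumber. The numerator is essentially free: from the definition $\epsilon=\nu\bfE$ in \eqref{quantities2} one has $\bfE=\epsilon/\nu$, with no recourse to either hypothesis. All of the content therefore lives in the denominator $\bfe$, and this is where both assumptions—the inertial-range power law $\bfe_{\kk,2\kk}\sim\epsilon^{2/3}\kk^{-2/3}$ on $[\kfup,\keps]$ and the bulk condition $\lgl|u|^2\rgl\sim\lgl|u_{\kfup,\keps}|^2\rgl$—enter.

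First I would invoke the bulk assumption to discard the energy outside the inertial band, writing $\bfe\sim\bfe_{\kfup,\keps}$, the energy density carried by wavenumbers in $[\kfup,\keps]$. Since the power law holds there with $4\kfup\le\keps$, I then sum the dyadic contributions by Lemma~\ref{sumlem} (its geometric-sum argument is indifferent to whether the summand is $\bft$ or $\bfe$), applied with $\alpha=\epsilon^{2/3}$, $p=2/3$, $\kk_1=\kfup$, $\kk_2=\keps$. This yields $\bfe_{\kfup,\keps}\sim\epsilon^{2/3}\bigl(\kfup^{-2/3}-\keps^{-2/3}\bigr)$. Because $4\kfup\le\keps$ forces $\keps^{-2/3}\le 4^{-2/3}\kfup^{-2/3}$, the lower endpoint dominates and $\bfe\sim\bfe_{\kfup,\keps}\sim\epsilon^{2/3}\kfup^{-2/3}$.

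Combining the two computations,
\[
   \ktau^2=\frac{\bfE}{\bfe}\sim\frac{\epsilon/\nu}{\epsilon^{2/3}\kfup^{-2/3}}=\frac{\epsilon^{1/3}\kfup^{2/3}}{\nu}.
\]
To reach the claimed form I eliminate $\nu$ through \eqref{ketadef}: from $\keps^4=\epsilon/\nu^3$ one gets $\epsilon^{1/3}/\nu=\keps^{4/3}$, whence $\ktau^2\sim\keps^{4/3}\kfup^{2/3}$. Raising to the power $3/2$ gives $\ktau^3\sim\keps^2\kfup$, which is the assertion of Theorem~\ref{3Dconnect}.

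I expect the only substantive step to be the reduction to the inertial band, and this is precisely what the bulk hypothesis supplies. The point to watch is that with the shallow exponent $p=2/3<1$ the partial sum $\bfe_{\kfup,\keps}$ is controlled by its large-scale (low-$\kk$) end near $\kfup$; consequently any energy residing below $\kfup$, or leaking into the dissipation range above $\keps$, would dominate $\lgl|u|^2\rgl$ and corrupt the estimate. The assumption $\lgl|u|^2\rgl\sim\lgl|u_{\kfup,\keps}|^2\rgl$ is exactly the statement that no such contribution is present, and once it is granted the remainder is the geometric summation of Lemma~\ref{sumlem} together with the algebraic substitution for $\keps$.
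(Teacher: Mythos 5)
Your argument is correct: the identity $\epsilon=\nu\bfE$ handles the numerator exactly, the bulk hypothesis plus the dyadic summation of Lemma~\ref{sumlem} (with $p=2/3$, so the sum is dominated by the $\kfup$ end) gives $\bfe\sim\epsilon^{2/3}\kfup^{-2/3}$, and eliminating $\nu$ via $\keps^{4/3}=\epsilon^{1/3}/\nu$ yields $\ktau^3\sim\keps^2\kfup$. The paper itself does not reproduce a proof of Theorem~\ref{3Dconnect} (it defers to \cite{DFJ5}), but your route is essentially the same computation carried out there and mirrors exactly how the paper derives the analogous $\ktheta$ estimates in Theorems~\ref{2DPrext} and~\ref{3DbigPr}.
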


% It is also shown in \cite{DFJ5} that, in general,
% \begin{equation}\label{gen3D}
% \left(\frac{\kz}{\kfup}\right)^{1/8} G^{1/4} \lesssim \frac{\keps}{\kz} \;, \qquad \forall \ G \gtrsim \left(\frac{\kfup}{\kz}\right)^{3/2}
% \end{equation}
\noindent
Assuming \eqref{ktaubig}, the bound \eqref{gen3D} can be sharpened to
\begin{equation}\label{sharp3D}
   \left(\frac{\kz}{\kfup}\right)^{11/16} G^{3/8} \lesssim \frac{\keps}{\kz}
	\lesssim \left(\frac{\kz}{\kfup}\right)^{1/8} G^{3/8},
	\qquad \forall \ G  \gtrsim \Bigl(\frac{\kfup}{\kz}\Bigr)^{3/2}.
\end{equation}

The powers in \eqref{ksigbig} and \eqref{ktaubig} are suggestive of
the extent to which the corresponding fluxes are constant over
a given range, or alternatively, the width of the inertial range in each case.

\subsection{Passive tracer}\label{s:i-tht}

A condition for a cascade of the tracer is derived just as those for the NSE.
Let $\kappa$ and $\kgup$ be fixed with $\kappa>\kgup$.
Multiply \eqref{q:dcdt} by $\tht^>$ in $L^2$ to get
\begin{equation}\begin{aligned}
   \frac12\ddt{\;}|\tht^>|^2 + \mu|\gb\tht^>|^2
	&= -(\ub\cdot\gb\tht^<,\tht^>) + (\Q^>,\tht^>)\\
	&= -(\ub^<\cdot\gb\tht^<,\tht^>) + (\ub^>\cdot\gb\tht^>,\tht^<) + (\Q^>,\tht^>)\\
	&= L^d \flux_\kappa + (\Q^>,\tht^>)
\end{aligned}\end{equation}
where
\[
   \flux_\kappa := \frac{1}{L^d} \left[-(\ub^<\cdot\gb\tht^<,\tht^>) + (\ub^>\cdot\gb\tht^>,\tht^<) \right]
\]
is the downscale (i.e.\ towards larger $|k|$) flux of $\tht$ through wavenumber $\kappa$.
Now $\Q^>=0$ since $\kappa>\kgup$, so upon taking average, the time derivative
disappears and we get
\begin{equation}\label{q:ek}
   \mu\avg{|\gb\tht^>|^2} = \avg{\flux_\kappa}.
\end{equation}
%Assuming that the limit exists, we define
%\begin{equation}
%   \avg{\flux} := \lim_{\kappa\to\infty}\,\avg{\flux_\kappa}.
%\end{equation}

The tracer ``energy'' cascade mechanism
%\cite[\S8]{foias-jolly-manley-rosa-temam:05}
requires that $\avg{\flux_\kappa}$ is (nearly) constant
for $\kappa\in[\kappa_*,\kappa^*]\subset[\kapu,\kapc]$.
Noting that
\begin{equation}\label{tracer cascade}
\begin{aligned}
  \chi \ge  \avg{\flux_\kappa} &= \frac{\mu}{L^{d}} \avg{|\gb\tht^>|^2}
	& &= \frac{\mu}{L^{d}}\avg{|\gb\tht|^2} - \frac{\mu}{L^{d}}\avg{|\gb\tht^<|^2}\\
	& = \chi - \kappa^2 \frac{\mu}{L^d}\avg{|\tht^<|^2}
	& &\ge \chi - \kappa^2 \frac{\mu}{L^d}\avg{|\tht|^2}\\
	&= \chi-\frac{\kappa^2}{\kapc^2}\frac{\mu}{L^d}\avg{|\gb\tht|^2}
	& &=\chi \Bigl[1 - \Bigl(\frac{\kappa}{\kapc}\Bigr)^2\Bigr],
\end{aligned}\end{equation}
we obtain the tracer analogue of \eqref{cascrel} and \eqref{directenergy},
\begin{equation}\label{directenergy-theta}
   1-\Bigl(\frac{\kappa}{\ktheta}\Bigr)^2
	\le \frac{\lgl \flux_\kk \rgl}{\chi}
	\le 1
	\quad\text{for}\quad
   \kg \le \kappa \le \ktheta.
%	:= \biggl(\frac{\lgl |\nabla \theta |^2 \rgl}{ \lgl |\theta|^2 \rgl }\biggr)^{1/2}.
\end{equation}

The relations \eqref{cascrel}, \eqref{directenergy} and \eqref{directenergy-theta} all imply cascades (more precisely, constancy of fluxes) provided that the indicator wavenumbers $\ksig$, $\ktau$ and $\ktheta$ are sufficiently large.
Criteria on the forcing $f$ and $g$ that would give these conditions, directly from the NSE without further assumptions, so far remain elusive.
% , conditions whose direct proofs (from the NSE) remain elusive.

% We next compute the useless upper bound
% \begin{equation}
%    \flux_\kappa = (\ub\cdot\gb\tht^>,\tht^<)
% 	\le |\ub|_{L^\infty}^{}|\gb\tht^<|_{L^2}^{}|\tht^>|_{L^2}^{}
% 	\le \frac{1}{\kappa}\,|\ub|_{L^\infty}^{}|\gb\tht^>|_{L^2}^{}|\gb\tht^<|_{L^2}^{}.
% \end{equation}
% Bounding $|\ub|$ for all time, averaging and using \eqref{q:Afg},
% \begin{equation}
%    \avg{\flux_\kappa}
% 	\le \frac1\kappa\,|\ub|_{L_t^\infty L_\xb^\infty}^{} \avg{|\gb\tht^>|\,|\gb\tht^<|}
% 	\le \frac1\kappa\,|\ub|_{L_t^\infty L_\xb^\infty}^{} \avg{|\gb\tht^>|^2}^{1/2}\avg{|\gb\tht^<|^2}^{1/2}
% \end{equation}
% With \eqref{q:ek}, this then implies
% \begin{equation}
%    \avg{|\gb\tht^>|^2} \le \frac1{\mu^2\kappa^2}\,|\ub|_{L_t^\infty L_\xb^\infty}^2 \avg{|\gb\tht^<|^2}.
% \end{equation}

% ===========================================================================

\section{2D Case, effect of energy spectrum on $\ktheta$}\label{2DTracer}

In this section, we prove tracer analogues of Proposition \ref{2Dconnect},
% \mnote{Djoko: I changed this to one Proposition.  - Mike},
relating the indicator wavenumber $\ktheta$ to $\keta$.
The interesting cases are where there are two spectra for the tracer, which in 2D is expected when the injection wavenumbers for tracer are below those for the fluid.
% Instead of the spectra, we assume the corresponding power laws, and that the bulk of the tracer is in the cascade ranges.

%In analogy with $\keta$ in 2D NSE, and assuming that at least part of the tracer cascade takes place in the enstrophy cascade range, we denote by $\kbeta$ the wavenumber beyond which diffusion should dominate the tracer, so
%\begin{equation*}
%   \kbeta=\left(\frac{\eta}{\mu^3}\right)^{1/6} \;.
%\end{equation*}

\subsection{Large Schmidt number}

\begin{figure}[h]
\psfrag{k0}{$\kz$}
\psfrag{k1}{$\kk^{-1}$}
\psfrag{k53}{$\kk^{-5/3}$}
\psfrag{ds}{$\keta$}
\psfrag{ds2}{$\kbeta$}
% \psfrag{q1}{$\kgdn$}
  \psfrag{q2}{$\kgup$}
  \psfrag{f1}{$\kfdn$}
  \psfrag{f2}{$\kfup$}
  \centerline{\includegraphics[scale=.7]{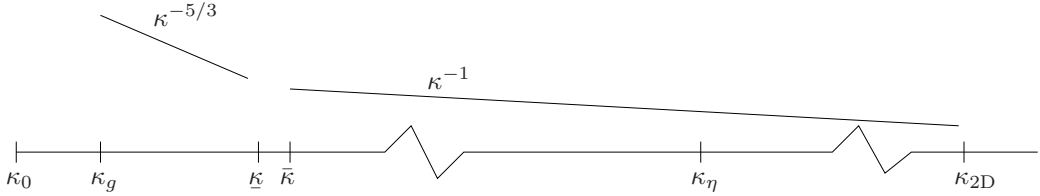}}
\caption{Expected tracer spectra for the case of inverse cascade with large Schmidt number.}
\label{specfig2}
\end{figure}
For large Schmidt number $\mu/\nu$, there is a range $[\keta,\kbeta]$ where the tracer is advected by a viscous fluid flow.
According to the classical theory \cite[pp.~367--9]{Vallis}, here we expect a $\kappa^{-1}$ tracer spectrum:
First, the time scale for this range is determined by substituting $\keta$ into \eqref{totem}, which gives
\begin{equation}\label{totem2D}
\tau_{\keta}=\eta^{-1/3}\;.
\end{equation}
One then sets $\tau_{\keta}$ equal to the diffusive time scale $(\mu\kk^2)^{-1}$ to find
\begin{equation}\label{kbetadef}
   \kbeta := \bigl({\eta}/{\mu^3}\bigr)^{1/6}
	= \Sc^{1/2}\keta\,.
\end{equation}
Using \eqref{totem2D} in \eqref{spec3}, and solving for $\cT(\kk)$
gives the same tracer spectrum as for $\kfup \le \kk \le \keta$,
\begin{equation}\label{spec55-2d}
   \cT(\kk)\sim \chi\eta^{-1/3}\kk^{-1}
   \quad \text{for} \quad \kfup \le \kk \le \kbeta \;.
\end{equation}

% Though larger Schmidt numbers make the objective range for the tracer cascade wider, the estimate for $\ktheta$ is compensated by a helpful $\Sc$ term.

Assuming power laws corresponding to the tracer spectra, we relate $\ktheta$ to $\keta$ and show that asymptotically $\ktheta\sim\kbeta$ for large $\Pr$:

\begin{thm}\label{2DPrext}
Suppose that $\kgup < \kfdn$ holds along with
\begin{equation}\label{bigksig}
   \ksig^2 \sim \keta^2/\ln({\keta}/{\kfup})\,,
\end{equation}
\begin{equation}\label{bulk22}
   \lgl | \theta|^2\rgl \sim \lgl | \theta_{\kg,\kfdn}|^2\rgl+\lgl | \theta_{\kfup,\kbeta}|^2\rgl
\end{equation}
and
\begin{subnumcases}
{\bft_{\kk,2\kk}\sim} \chi \epsilon^{-1/3}\kk^{-2/3}
     & for  $\kg \le \kk \le \kfdn$ \label{law33}\\
                                  \chi \eta^{-1/3}
     & for $\kfup \le \kk \lesssim \kbeta$  \label{law44} \;.
\end{subnumcases}
We then have
\begin{equation}\label{ab}
  \ktheta^2 \sim \frac{1}{a+b}\;,
\end{equation}
where
\[
  a=\kbeta^{-4/3}\Pr^{-1/3}\bigl(\kgup^{-2/3}-\kfdn^{-2/3}\bigr)
	\ln(\keta/\kfup)^{-1/3}  \quad \text{and} \quad b=\kbeta^{-2}\ln(\kbeta/\kfup).
\]

If, moreover,
\begin{equation}\label{sets1}
    \kg \sim \kz \aand \kfdn\sim\kfup
%  &\zeta := \kfup/\kz \ll G \label{sets2}
\end{equation}
along with, for any $r\in[4/3,2)$,
\begin{equation}\label{q:bigPr}
   \Pr \gtrsim (G\,\kfup/\kz)^{(3r-4)/(12-6r)}
   \aand
   \kfup/\kz \le (G\,(\ln G)^{1/2}/\ex)^{1/5},
\end{equation}
we have
\begin{equation}\label{bigPrkth}
  \ktheta^2 \sim \kbeta^r \kz^{2-r}/\ln(\kbeta/\kfup).
\end{equation}
\end{thm}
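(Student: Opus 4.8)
The plan is to evaluate $\ktheta^2=\lgl\|\theta\|^2\rgl/\lgl|\theta|^2\rgl$ directly from its definition in \eqref{q:ktausigtht}. The numerator is immediate: by the definition of the tracer dissipation rate in \eqref{quantities2}, $\tfrac{1}{L^d}\lgl\|\theta\|^2\rgl=\tfrac{1}{L^d}\lgl|\gb\theta|^2\rgl=\chi/\mu$. For the denominator I would split $\lgl|\theta|^2\rgl$ using the spectral decomposition \eqref{bulk22} into a contribution from $[\kg,\kfdn]$ and one from $[\kfup,\kbeta]$, evaluating each as a dyadic sum via Lemma~\ref{sumlem}. The power law \eqref{law33} has exponent $p=2/3>0$, so \eqref{sum1} gives $\bft_{\kg,\kfdn}\sim\chi\epsilon^{-1/3}(\kg^{-2/3}-\kfdn^{-2/3})$, while \eqref{law44} has $p=0$, so \eqref{sum2} gives $\bft_{\kfup,\kbeta}\sim\chi\eta^{-1/3}\ln(\kbeta/\kfup)$. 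Forming the ratio, the common factor $\chi$ cancels and $\ktheta^2\sim[\mu\epsilon^{-1/3}(\kg^{-2/3}-\kfdn^{-2/3})+\mu\eta^{-1/3}\ln(\kbeta/\kfup)]^{-1}$.

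To match this with \eqref{ab} I would convert the two prefactors into the wavenumbers appearing in $a$ and $b$. Writing $\mu=\nu/\Pr$ and using $\keta=(\eta/\nu^3)^{1/6}$ from \eqref{ketadef} (so $\eta^{-1/3}=\nu^{-1}\keta^{-2}$) gives $\mu\eta^{-1/3}=\Pr^{-1}\keta^{-2}=\kbeta^{-2}$ by \eqref{kbetadef}, which is exactly the coefficient in $b$. For the first term I would use the 2D identity $\eta=\ksig^2\epsilon$, immediate from \eqref{quantities2} and \eqref{q:ktausigtht}, together with the hypothesis \eqref{bigksig} to write $\epsilon^{-1/3}\sim\eta^{-1/3}\keta^{2/3}\ln(\keta/\kfup)^{-1/3}$; combined with $\mu\eta^{-1/3}=\kbeta^{-2}$ and $\keta=\kbeta\Pr^{-1/2}$ this yields $\mu\epsilon^{-1/3}\sim\kbeta^{-4/3}\Pr^{-1/3}\ln(\keta/\kfup)^{-1/3}$, the coefficient in $a$. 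This establishes \eqref{ab}.

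For the sharpened conclusion \eqref{bigPrkth} I would bound $a+b$ from above by the reciprocal of the target. Under \eqref{sets1}, since $\kg\sim\kz\ll\kfup\sim\kfdn$, we have $\kgup^{-2/3}-\kfdn^{-2/3}\sim\kz^{-2/3}$. One checks directly that $b$ is negligible against the target, as $b/(\kbeta^{-r}\kz^{r-2}\ln(\kbeta/\kfup))=(\kz/\kbeta)^{2-r}\ll1$ for $r<2$. For the $a$ term, substituting $\kbeta=\Pr^{1/2}\keta$ reduces the inequality $a\lesssim\kbeta^{-r}\kz^{r-2}\ln(\kbeta/\kfup)$ to $\Pr^{(2-r)/2}\gtrsim(\keta/\kz)^{r-4/3}$ up to logarithms; eliminating $\keta$ via the sharpened bound \eqref{ketasharp}, $\keta/\kz\sim(G\kfup/\kz)^{1/4}$ up to logarithms, this becomes precisely $\Pr\gtrsim(G\kfup/\kz)^{(3r-4)/(12-6r)}$, the threshold in \eqref{q:bigPr}. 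Hence both $a$ and $b$ are $\lesssim\kbeta^{-r}\kz^{r-2}\ln(\kbeta/\kfup)$, so $a+b$ is as well and $\ktheta^2\gtrsim\kbeta^r\kz^{2-r}/\ln(\kbeta/\kfup)$.

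The main obstacle is the bookkeeping of logarithms in this last step, and the promotion of the lower bound to the full equivalence in \eqref{bigPrkth}. The target carries $\ln(\kbeta/\kfup)$ whereas $1/a$ naturally carries $\ln(\keta/\kfup)^{1/3}$, and \eqref{ketasharp} contributes further factors of $\ln G$; reconciling these into a single clean $\sim$ is where the second hypothesis in \eqref{q:bigPr}, namely $\kfup/\kz\le(G(\ln G)^{1/2}/\ex)^{1/5}$, is needed, as it keeps $\kfup/\kz$ small enough relative to $G$ that all these logarithms are comparable. The exact two-sided equivalence, as opposed to the one-sided $\ktheta^2\gtrsim$, hinges on the exponents of $G\kfup/\kz$ on the two sides cancelling exactly — a cancellation built into the choice $(3r-4)/(12-6r)$ — at which point the matching upper bound $\ktheta^2\lesssim\kbeta^r\kz^{2-r}/\ln(\kbeta/\kfup)$ follows from the lower-bound side of \eqref{ketasharp} with $\Pr$ saturating the threshold. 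I expect verifying this exact cancellation, and confirming that $a$ indeed dominates $b$ throughout the admissible range, to be the most delicate part of the argument.
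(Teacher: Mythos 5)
Your proposal is correct and follows essentially the same route as the paper: the same dyadic summation via Lemma~\ref{sumlem}, the same conversion of the prefactors $\mu\epsilon^{-1/3}$ and $\mu\eta^{-1/3}$ into $a$ and $b$ through $\eta=\ksig^2\epsilon$, \eqref{bigksig} and \eqref{kbetadef}, and the same majorization of $a$ by $\kbeta^{-r}\kz^{r-2}\ln(\kbeta/\kfup)$ using the two sides of \eqref{ketasharpbar} together with the threshold on $\Pr$. Your closing remarks also correctly identify the genuinely delicate points — the log bookkeeping, the role of the hypothesis $\kfup/\kz\le(G(\ln G)^{1/2}/\ex)^{1/5}$ in keeping the logarithmic factor bounded below, and the fact that the argument most directly yields the one-sided bound $\ktheta^2\gtrsim\kbeta^r\kz^{2-r}/\ln(\kbeta/\kfup)$ — which is exactly how the paper's own proof proceeds.
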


\begin{proof}
First we estimate over the inverse cascade as follows
\begin{align*}
   \bft_{\kgup,\kfdn}
	&\sim \frac{\chi}{\mu}\Bigl(\frac{\mu^3}{\epsilon}\Bigr)^{1/3}\bigl(\kgup^{-2/3}-\kfdn^{-2/3}\bigr)
	& &\text{by (\ref{sum1}), (\ref{kbetadef})}\\
	&= \frac{\chi}{\mu}\kbeta^{-2} \ksig^{2/3}\bigl(\kgup^{-2/3}-\kfdn^{-2/3}\bigr) & &\\
	&\sim \frac{\chi}{\mu}\kbeta^{-2}\keta^{2/3}\bigl(\kgup^{-2/3}-\kfdn^{-2/3}\bigr) \ln(\keta/\kfup)^{-1/3} & &\text{by \eqref{bigksig}}\\
	&= \frac{\chi}{\mu}\kbeta^{-4/3}\Pr^{-1/3}\bigl(\kgup^{-2/3}-\kfdn^{-2/3}\bigr) \ln(\keta/\kfup)^{-1/3} & &\text{by \eqref{kbetadef}.}
\end{align*}
Then over the range beyond $\kfup$ we find%
% \mnote{Mike: I removed the second $\sim$.}
\begin{equation}\label{q:inersum}
   \bft_{\kfup,\kbeta} \sim \frac{\chi}{\mu\kbeta^2} \ln(\kbeta/\kfup).
%  \sim \frac{\chi}{\mu\keta^2}\ln(\keta\Pr^{1/2}/\kfup) \;.
\end{equation}
It follows from \eqref{bulk22} that
\begin{equation}\label{gimme}
\bft_{\kz,\infty} \sim  \bft_{\kgup,\kfdn} +  \bft_{\kfup,\kbeta} %+   \bft_{\keta,\infty}
 %\\ &
 \sim  \frac{\chi}{\mu} \left(a +  b \right)
\end{equation}
and hence
\begin{equation} \label{gimme2}
\ktheta^2 =\frac{({\mu}/{L^2})\, \lgl |\nabla \theta|^2 \rgl}{({\mu}/{L^2})\, \lgl | \theta|^2 \rgl}
 = \frac{\chi}{\mu \bft_{\kz,\infty}} \sim \frac{1}{a+b} \;.
\end{equation}
From \eqref{ketasharp} we have
% for $G \gtrsim (\kfup/\kz)^2$
\begin{equation}\label{ketasharpbar}
   \left(\frac{\kz}{\kfup}\right)^{5/4} \frac{G^{1/4}}{(\ln G)^{1/4}} \lesssim \frac{\keta}{\kfup}
	\lesssim \left(\frac{\kz}{\kfup}\right)^{3/4} G^{1/4}(\ln G)^{1/8} \;.
\end{equation}

For the second part of the theorem, we seek to majorise $a$ as
\begin{equation} \label{wantPr}
   a \lesssim \kbeta^{-r} \kz^{r-2}\ln (\kbeta/\kfup),
\end{equation}
which is equivalent to
\begin{equation*}
   \left(\frac{\kbeta}{\kz}\right)^{r-4/3}\left[\left(\frac{\kg}{\kz}\right)^{-2/3}-\left(\frac{\kfdn}{\kz}\right)^{-2/3} \right]
	\lesssim \Pr^{1/3}\left( \ln \frac{\keta}{\kfup}\right)^{1/3}\ln \left(\frac{\keta}{\kfup}\Pr^{1/2}\right).
\end{equation*}
Since $\kfdn>\kg$ and $\kg\sim\kz$ (but $\kg>\kz$), we have by \eqref{sets1}
\[
   (\kg/\kz)^{-2/3}-(\kfdn/\kz)^{-2/3}\sim(\kg/\kz)^{-2/3}\sim1,
\]
so the last inequality is in turn equivalent to
\begin{equation}\label{q:aux00}
   ({\keta/\kz})^{r-4/3}
          \lesssim \Pr^{1-r/2}\Bigl(\ln\frac\keta\kfup\Bigr)^{1/3} \Bigl(\ln\frac\keta\kfup + \ln\Pr\Bigr).
\end{equation}
From the upper bound in \eqref{ketasharpbar} we have, with $\zeta:=\kfup/\kz$,
\begin{equation}
   \keta/\kz \lesssim (\zeta G)^{1/4}(\ln G)^{1/8}.
\end{equation}
% where we have used $\zeta\ll G$ from \eqref{sets2} for the second inequality.\mnote{Mike: if we want $G^{1/4}$, we need $\zeta\sim1$.}
Using this to bound the left-hand side of \eqref{q:aux00}, we have
\begin{equation}\label{q:aux01}
   (\keta/\kz)^{r-4/3} \lesssim (\zeta G)^{r/4-1/3}(\ln G)^{r/8-1/6}.
\end{equation}
Now the lower bound in \eqref{ketasharpbar} is
\[
   \zeta^{-5/4}(G/\ln G)^{1/4} \lesssim \keta/\kfup,
\]
which we then apply to the right-hand side of \eqref{q:aux00} to obtain
\begin{equation*}\begin{aligned}
   \Bigl(\ln\frac\keta\kfup\Bigr)^{1/3}\Bigl(\ln\frac\keta\kfup+\ln\Pr\Bigr)
	&\gtrsim \ln\Bigl(\frac{G}{\zeta^5\ln G}\Bigr)^{1/3}\Bigl[\ln\Bigl(\frac{G}{\zeta^5\ln G}\Bigr)+\ln\Pr\Bigr]\\
	&\gtrsim \ln\Bigl(\frac{G}{\zeta^5\ln G}\Bigr)^{4/3}.
\end{aligned}\end{equation*}
Putting this together with \eqref{q:aux01}, we find that \eqref{q:aux00} is implied by
\begin{equation}\label{q:aux02}
   (\zeta G\,(\ln G)^{1/2})^{r/4-1/3} \lesssim \Pr^{1-r/2}\ln\Bigl(\frac{G}{\zeta^5\ln G}\Bigr)^{4/3}.
\end{equation}
Now for $G\ge1$ we have
\[
   \bigl(G\,(\ln G)^{1/2}\bigr)^{1/2} \le G/\ln G,
\]
so assuming this and writing $\gamma:=G\,(\ln G)^{1/2}$, \eqref{q:aux02} is implied by
\begin{equation}
   (\zeta \gamma)^{r/4-1/3} \lesssim \Pr^{1-r/2}\ln\bigl(\zeta^{-5}\gamma\bigr)^{4/3}.
\end{equation}
With the extra assumption $\gamma\zeta^{-5}\ge\ex$,
we see that \eqref{q:bigPr} implies \eqref{bigPrkth}.
% we arrive at the conclusion
% \begin{equation}
%    \zeta G\ln^{1/2}G \le \Pr^{(12-6r)/(3r-4)}.
% \end{equation}
\end{proof}

% \bigskip\hbox to\hsize{\qquad\hrulefill\qquad}\medskip

% \subsection{Small Schmidt numbers}
%
% For $\Sc \ll 1$ in 2D we have $\ktheta^2 \sim 1/a$, which by \eqref{ketasharp} would achieve
% $\ktheta \sim \kbeta^{2/3}$ if $G \sim \exp(\Sc^{-1})$, up to a logarithm.

\subsection{Moderate Schmidt number}\label{modsubsection}
%$\nu\sim\mu$, $\kgup  <  \underline\kappa_f$} \hfill \break

For moderate Schmidt numbers, i.e. $\nu/\mu \sim 1$, we expect $\kbeta \sim \keta$.
In the simplest case, with $\kgup=\kfup$ and $\ksig^2 \sim \keta^2/\ln({\keta}/{\kfup})$, the tracer cascade occurs in the enstrophy cascade range, viz.
\begin{align}
   &\lgl|\theta|^2\rgl \sim \lgl|\theta_{\kfup,\keta}|^2\rgl,\label{q:sc1-thtsp}\\
   &\bft_{\kk,2\kk} \sim \chi\eta^{-1/3}
	\qquad\textrm{for } \kfup\le\kk\lesssim\keta.\label{q:sc1-ensc1}
\end{align}
We then have
\begin{align*}
   \bft_{\kfup,\keta} &\sim \chi\eta^{-1/3}\ln(\keta/\kfup) & &\textrm{by \eqref{q:sc1-ensc1} and \eqref{sum2}}\\
	&= \frac{\chi}{\mu}\Bigl(\frac{\mu^3}{\eta}\Bigr)^{1/3}\ln(\keta/\kfup)
	& &= \frac{\chi}{\mu\keta^2}\ln(\keta/\kfup),
\end{align*}
which by \eqref{q:sc1-thtsp} implies
\begin{equation}
   \bft_{\kz,\infty} \sim \bft_{\kg,\keta} \sim \frac{\chi}{\mu\keta^2}\ln(\keta/\kfup).
\end{equation}
Thus, $\ktheta\sim\keta$ up to logarithm,
\begin{equation}
   \ktheta^2 = \frac{\lgl|\nabla\theta|^2\rgl}{\lgl|\theta|^2\rgl}
	= \frac{\chi}{\mu\bft_{\kz,\infty}}
	\sim \keta^2/\ln(\keta/\kfup)
	\sim \kbeta^2/\ln(\kbeta/\kfup).
\end{equation}

If the energy injection scale is small compared to the tracer injection scale, i.e.\ $\kgup\ll\kfdn$, we expect to have two tracer cascade ranges (both downscale).
In the gap between $\kgup$ and $\kfdn$, the energy spectrum is expected to take the form $\cE(\kappa)\sim \epsilon^{2/3}\kappa^{-5/3}$, so that the tracer spectrum should be $\cT(\kappa)\sim \chi \epsilon^{-1/3} \kappa^{-5/3}$.
On the other hand, in the enstrophy cascade range $\kfup \le \kappa \lesssim \keta$, we expect $\cE(\kappa)\sim \eta^{2/3}\kappa^{-3}$, so that $\cT(\kappa)\sim \chi \eta^{-1/3} \kappa^{-1}$.
This case is virtually identical to that treated in Theorem~\ref{2DPrext}, putting $\kbeta\sim\keta$ since $\Sc=\nu/\mu\sim1$.

\begin{figure}[h]
\psfrag{k0}{$\kz$}
\psfrag{k1}{$\kk^{-1}$}
\psfrag{k53}{$\kk^{-5/3}$}
\psfrag{ds}{$\keta \sim\kbeta$}
% \psfrag{q1}{$\kgdn$}
  \psfrag{q2}{$\kgup$}
  \psfrag{f1}{$\kfdn$}
  \psfrag{f2}{$\kfup$}
  \centerline{\includegraphics[scale=0.7]{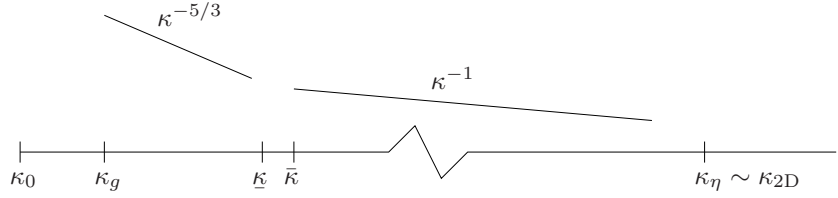}}
\caption{Expected tracer spectra for case of inverse cascade with moderate Schmidt number.}
\label{specfig}
\end{figure}

Note that by Proposition \ref{2Dconnect}  the condition \eqref{bigksig} could be replaced by the more natural, but stronger assumptions
\begin{align}
   &\bfe_{\kk,2\kk}\sim \eta^{2/3}\kk^{-2}
\quad \text{for} \quad  \kfup \le \kk \lesssim \keta,  \\ %\label{law2}
   &\lgl\|u_{\kz,\kfup}\|^2\rgl \lesssim \lgl\| u_{\kfup,\infty}\|^2\rgl \\ %\label{bulk1}
   &4\kfup \le \keta
%\end{subnumcases}
\end{align}
which are consistent with the discrete tracer spectrum \eqref{law44}.

The main influence on the $\ktheta$ estimate in Theorem~\ref{2DPrext} (with $\kbeta\sim\keta$) is the assumption
\begin{equation}\label{mainspec}
   \bft_{\kk,2\kk} \sim \chi \eta^{1/3} \qquad \text{for} \quad \kfup \le \kk \lesssim  \keta \sim \kbeta \;.
\end{equation}
The following is partial converse.
\begin{thm}
If $\ktheta \gtrsim \kbeta$, then $\bft_{\kk,2\kk} \lesssim \chi\eta^{1/3}$ \;.
\end{thm}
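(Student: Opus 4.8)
The plan is to read the claim straight off the variational characterization \eqref{gimme2} of $\ktheta$, without invoking any assumed power law: the hypothesis $\ktheta\gtrsim\kbeta$ already carries all the needed information. In this sense the statement is the \emph{free} half of a converse to \eqref{mainspec} — it recovers the upper bound on the per-band tracer variance, but neither the matching lower bound nor the confinement of the cascade to the range $[\kfup,\kbeta]$, which is exactly why it is only a \emph{partial} converse.

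First I would turn the hypothesis into a bound on the total tracer variance. By \eqref{gimme2} one has $\bft_{\kz,\infty}=\chi/(\mu\ktheta^2)$, so that $\ktheta\gtrsim\kbeta$, i.e.\ $\ktheta^2\gtrsim\kbeta^2$, gives at once
\[
   \bft_{\kz,\infty}=\frac{\chi}{\mu\ktheta^2}\lesssim\frac{\chi}{\mu\kbeta^2}.
\]
Next I would dominate an arbitrary dyadic band by this full sum. Since every summand $\lgl|\hat\theta_k|^2\rgl$ in \eqref{partialsum} is nonnegative, each contiguous band $\bft_{\kk,2\kk}$ with $\kz\le\kk$ is a sub-sum of $\bft_{\kz,\infty}$, whence $\bft_{\kk,2\kk}\le\bft_{\kz,\infty}$. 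Chaining this with the previous display and identifying $\chi/(\mu\kbeta^2)$, via the definition \eqref{kbetadef} of $\kbeta$ together with the amplitude recorded in \eqref{mainspec}, gives
\[
   \bft_{\kk,2\kk}\le\bft_{\kz,\infty}\lesssim\frac{\chi}{\mu\kbeta^2}\sim\chi\eta^{1/3},
\]
which is the asserted bound. Note that it holds for \emph{every} band $[\kk,2\kk]$ with $\kz\le\kk$, with no a priori spectral restriction; the single inequality $\ktheta\gtrsim\kbeta$ suffices.

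There is no serious obstacle here — the argument is genuinely short, so the only care needed is bookkeeping. Concretely, I would check that the $\gtrsim$ in the hypothesis (a universal constant valid for $G\ge G_*$, per the paper's convention) propagates cleanly to the $\lesssim$ in the conclusion, and that the identification of $\chi/(\mu\kbeta^2)$ with the per-band amplitude $\chi\eta^{1/3}$ is read off exactly from \eqref{kbetadef}, \eqref{mainspec} and \eqref{q:inersum}, so that the stated power of $\eta$ is matched. The conceptual point worth flagging in the write-up is \emph{why} this is only a partial converse: passing through $\bft_{\kk,2\kk}\le\bft_{\kz,\infty}$ discards all localization, so it can only ever return an upper bound. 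Recovering the full relation $\bft_{\kk,2\kk}\sim\chi\eta^{1/3}$ on $[\kfup,\kbeta]$ — the genuine converse — would additionally require bounding the total variance from below and confining its support to that range, neither of which follows from $\ktheta\gtrsim\kbeta$ alone.
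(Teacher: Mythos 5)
Your proof is correct and is essentially the paper's own argument in slightly different notation: both proofs reduce to $\bft_{\kk,2\kk}\le\bft_{\kz,\infty}=\chi/(\mu\ktheta^2)$ (Parseval together with the definitions of $\chi$ and $\ktheta$, i.e.\ the first equalities in \eqref{gimme2}) and then apply the hypothesis $\ktheta\gtrsim\kbeta$. One bookkeeping correction: by \eqref{kbetadef} one has $\mu\kbeta^2=\eta^{1/3}$, so your final identification should read $\chi/(\mu\kbeta^2)=\chi\eta^{-1/3}$ rather than $\chi\eta^{1/3}$ --- the exponent in the theorem statement and in \eqref{mainspec} is a sign typo (compare \eqref{q:sc1-ensc1} and Table~\ref{table1}), and the paper's own proof accordingly concludes with $\chi\eta^{-1/3}\gtrsim\bft_{\kk,2\kk}$.
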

% The proof is similar to one in \cite{DFJ3} showing that $\ksig \gtrsim \keta \Rightarrow \bfe_{\kk,2\kk} \lesssim \eta^{2/3}\kk^{-2}$.
% \mnote{Djoko: I can't find this in \cite{DFJ3}, so I must have seen it elsewhere.}
\begin{proof}
We can rewrite the assumption as
\[
   \ktheta^2 =\frac{\lgl |\nabla \theta|^2 \rgl}{\lgl |\theta |^2\rgl}
   \gtrsim\left( \frac{\eta}{\mu^3}\right)^{1/3} = \kbeta^2 \;,
\]
or as
\[
   \frac{\mu}{L^2}\lgl |\nabla \theta|^2 \rgl  \gtrsim \frac{\eta^{1/3}}{L^2}
   \lgl |\theta |^2\rgl,
\]
so that
\[
   \chi \eta^{-1/3} \gtrsim \frac{1}{L^2}  \lgl |\theta |^2\rgl
	\ge  \frac{1}{L^2}  \lgl |\theta_{\kk,2\kk} |^2\rgl = \bft_{\kk,2\kk}\;.
\]
\end{proof}

% \bigskip\hbox to\hsize{\qquad\hrulefill\qquad}\medskip
% \pagebreak

Theorem \ref{2DPrext} (with $\kbeta\sim\keta$) imposes a restriction on the ranges of the forcing terms and the Grashof number.
The indicator $\ktheta$ would achieve its maximum value, $\keta\sim\kbeta$ (up to a log), if one could choose $\kgup$, $\kfdn$, and $\kfup$ in such a way that $a\sim b$.
To investigate this, we seek $r$ such that
\begin{equation}\label{want1}
a \le c_0 \keta^{-r}\kz^{r-2}\ln\frac{\keta}{\kfup}
	\quad\text{where}\quad  \frac{4}{3} \le r \le 2,
\end{equation}
for some $c_0$, which is equivalent to
\begin{equation}\label{want2}
\left(\frac{\keta}{\kz}\right)^{r-4/3}\left[\left(\frac{\kg}{\kz}\right)^{-2/3}-
          \left(\frac{\kfdn}{\kz}\right)^{-2/3} \right] \le c_0 \left( \ln \frac{\keta}{\kfup}\right)^{4/3}.
\end{equation}

We now derive a sufficient condition for~\eqref{want2}.
Rewriting the lower bound in \eqref{ketasharpbar} as
\begin{equation}\label{q:aux3}
   c_1 \Bigl(\frac\keta\kz\Bigr)^{5/4} \frac{G^{1/4}}{(\ln G)^{1/4}}
	\le \frac{\keta}{\kfup},
\end{equation}
and the upper bound in \eqref{ketasharp} as
\begin{equation}\label{q:aux4}
   \frac\keta\kz \le c_2 \Bigl(\frac\kfup\kz\,G\,(\ln G)^{1/2}\Bigr)^{1/4},
\end{equation}
we use \eqref{q:aux3} on the right and \eqref{q:aux4} on the left in \eqref{want2},
% Using the lower bound in \eqref{ketasharpbar} on the right and the upper bound in \eqref{ketasharp} on the left in \eqref{want2},
we obtain, with $p:=(3r-4)/12\in[0,1/6]$, $\zeta:=\kfup/\kz$ and $c_3=c_0 c_1/c_2^p$,
% \begin{equation*}\label{want3p}
% \left[\frac{\kfup}{\kz}G(\ln G)^{1/2}\right]^p \left[\left(\frac{\kg}{\kz}\right)^{-2/3}-
%           \left(\frac{\kfdn}{\kz}\right)^{-2/3} \right]
%           \lesssim
%           \left\{\ln \left[\left(  \frac{\kz}{\kfup}\right)^5 \frac{G}{\ln G}\right]\right\}^{4/3},
% \end{equation*}
% or equivalently with $\zeta:=\kfup/\kz$ and an explicit constant,
\begin{align}
   &[\zeta G(\ln G)^{1/2}]^p\bigl(1-\zeta^{-2/3}\bigr)
	\le c_3\, \ln\bigl(\zeta^{-5}G/\ln G\bigr)^{4/3}\notag\\
   \Leftrightarrow\qquad
   &\frac1{c_3} \le \frac{\ln\bigl(\zeta^{-5}G/\ln G\bigr)^{4/3}}{[\zeta G(\ln G)^{1/2}]^p(1-\zeta^{-2/3})}.
\end{align}
Putting $G=\ex^\zeta$, this in turn is equivalent to
\begin{equation}\label{q:want3}
   \frac1{c_3} \le \frac{(\zeta-6\ln\zeta)^{4/3}}{\zeta^{3p/2}\ex^{p\zeta}(1-\zeta^{-2/3})}
	=: \varphi_p(\zeta).
\end{equation}

\begin{figure}[h]
\psfrag{zeta}{$\scriptstyle\zeta$}
\includegraphics[scale=0.5]{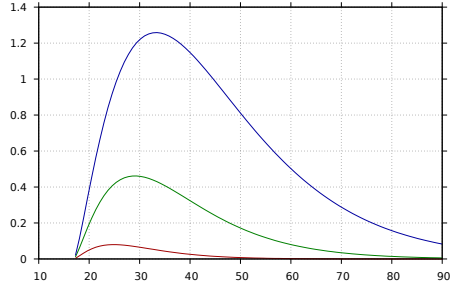}
\caption{From bottom to top: $\varphi_{1/6}$, $\varphi_{1/9}$ and $\varphi_{1/12}$.}
\label{vf1}
\end{figure}

In figure~\ref{vf1} we plot $\varphi_{1/6}$, $\varphi_{1/9}$ and $\varphi_{1/12}$ against $\zeta$.
It is clear that, at least for these values of $p$, there is a range of $\zeta=\kfup/\kz$ such that \eqref{q:want3}, and thus \eqref{want2}, is satisfied, {\em provided\/} that $c_3$ is sufficiently large.
(Since we are seeking a sufficient condition for \eqref{want2}, we can take $c_3$ smaller but not larger.)
While a good estimate for $c_3$ is not known, this plot suggests that even in the presence of a significant inverse cascade ($10\lesssim\zeta\lesssim20$) a wide tracer cascade range can be achieved,
\begin{equation}
   \ktheta^2 \sim \kbeta^r\kz^{2-r}/\ln\bigl(\kbeta/\kfup\bigr)
\end{equation}
with $r=2$, $16/9$ and $5/3$ for $p=1/6$, $1/9$ and $1/12$ respectively,
for large enough Grashof number ($G\sim\ex^\zeta$) to sustain turbulent fluid flow.

% \hbox to\hsize{\qquad\hrulefill\qquad}\medskip

\subsection{Effect of log corrected energy spectrum}

In order to enforce constant enstrophy flux Kraichnan \cite{Kr71} proposed a log correction to the energy spectrum in the inertial range for 2D turbulence
\begin{align*}
   \cE(\kappa)\sim \eta^{2/3}\kappa^{-3}(\log\kk/\kidn)^{-1/3}\;,
\end{align*}
which leads to a turnover time of
\begin{equation}\label{new_over}
\tau_\kk \sim \eta^{-1/3}(\log \kk/\kidn)^{-1/3} \;.
\end{equation}
This correction was shown in \cite{Ohkitani} to be consistent with an upper bound on the dimension of the global attractor in \cite{CFT89}.

If \eqref{new_over} is used in \eqref{spec3}, and the lower end of the inertial
range is $\kidn=\kfup$, the tracer spectrum takes the form
\begin{align*} \label{spec_log}
   \cT(\kk) \sim \chi \eta^{-1/3} \kk^{-1}(\log \kk/\kfup)^{-1/3}\;, \quad \kfup \le \kk \le \keta
\end{align*}
and
\begin{align*}
\bft_{\kk,2\kk} \sim \int_\kk^{2\kk} \cT(s) \ ds \sim \chi\eta^{-1/3}\left[(\log 2\kk/\kfup)^{2/3}-(\log \kk/\kfup)^{2/3}\right]
\;.
\end{align*}
Summing as in Lemma \ref{sumlem}, the terms telescope, so that
\begin{equation}
   \bft_{\kfup,\keta} \sim \frac{\chi}{\mu} \left(\frac{\mu^3}{\eta}\right)^{1/3} (\ln \keta/\kfup)^{2/3}
	 \sim \frac{\chi}{\mu\keta^2} (\ln \keta/\kfup)^{2/3} \;.
\end{equation}
Using this instead of \eqref{q:inersum} in the proof of Theorem \ref{2DPrext} yields
\begin{align*}
   \ktheta^2 \sim \frac{1}{a+b'}\;, \quad \text{where} \quad b' =\keta^{-2} (\ln \keta/\kfup )^{2/3} \;.
\end{align*}
We now seek $r$ such that
\begin{align*}%\label{want1}
   a \lesssim \keta^{-r}\kz^{2-r}(\ln\frac{\keta}{\kfup})^{2/3}\;, \quad \text{where} \quad  \frac{4}{3} \le r \le 2\;,
\end{align*}
which is equivalent to the analog of \eqref{want2}
\begin{equation} \label{want22}
\left(\frac{\keta}{\kz}\right)^{r-4/3}\left[\left(\frac{\kg}{\kz}\right)^{-2/3}-
          \left(\frac{\kfdn}{\kz}\right)^{-2/3} \right] \lesssim \ln \frac{\keta}{\kfup} \;,
\end{equation}
the only change being the power of the log on the right.

Proceeding as before, with $\zeta=\kfup/\kz$ and $p=(3r-4)/12$, and putting $G=\ex^\zeta$, this is implied by
\begin{equation}\label{q:want4}
   \frac1{c_4} \le \frac{\zeta-6\ln\zeta}{\zeta^{3p/2}\ex^{p\zeta}(1-\zeta^{-2/3})}
	=: \tilde\varphi_p(\zeta).
\end{equation}

\begin{figure}[h]
\psfrag{zeta}{$\scriptstyle\zeta$}
\includegraphics[scale=0.5]{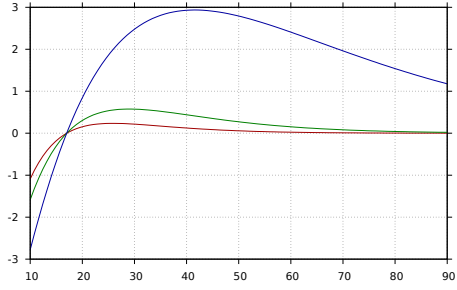}
\caption{From bottom to top: $\tilde\varphi_{1/9}$, $\tilde\varphi_{1/12}$ and $\tilde\varphi_{1/24}$.}
\label{vf2}
\end{figure}

\noindent
In figure~\ref{vf2} we plot $\tilde\varphi_{1/9}$, $\tilde\varphi_{1/12}$ and $\tilde\varphi_{1/24}$ against $\zeta$.
Again, we need $c_4$ sufficiently large for \eqref{q:want4} to hold.

\subsection{Tracer injection scales below energy injection scales}

In case the injection scales are reversed, so that $\kfup < \kgup$, then the analysis for both moderate and large Schmidt number proceeds as before, except the term $a$ is dropped in both cases, so the conclusion is that $\ktheta \sim \kbeta$ (up to a log).

% ===========================================================================

\section{3D Case}\label{3DTracer}

The large Schmidt number case is also interesting in 3D, as then we expect two ranges with distinct tracer spectra \cite[p.~368]{Vallis}:
For $\kappa\in(\kfup,\keps)$, we have the classical spectrum $\cT(\kappa)\sim\kappa^{-5/3}$.
For $\kappa$ beyond $\keps$,  substituting $\kk=\keps$ in \eqref{totem} gives a turnover time of
\begin{equation}\label{totem3D}
   \tau_{\keps}=(\nu/\epsilon)^{1/2} \;.
\end{equation}
Putting this equal to the diffusive time scale $(\mu\kk^2)^{-1}$ then yields
\begin{equation}\label{q:kbeta3d}
   \kbetap = \Bigl(\frac{\epsilon}{\nu\mu^2}\Bigr)^{1/4}
	= \Pr^{1/2}\keps
\end{equation}
the wavenumber where diffusion becomes important.
Using \eqref{totem3D} in \eqref{spec3}, and solving for $\cT(\kk)$ gives
\begin{equation}\label{spec55-3d}
   \cT(\kk)\sim \chi\left(\frac{\nu}{\epsilon}\right)^{1/2}\kk^{-1}
	\quad \text{for} \quad \keps \le \kk \le \kbetap.
\end{equation}

We have the following analogue of Theorem~\ref{2DPrext}:

\begin{thm}\label{3DbigPr}
Suppose that \eqref{ktaubig} holds along with $4\kg \le \keps$, $\Pr > 2$,
\begin{equation}
   \lgl | \theta |^2\rgl \sim \lgl | \theta_{\kgup,\kbetap}|^2\rgl,
\end{equation}
and
\begin{align*}
   \bft_{\kk,2\kk}\sim
\begin{cases}
\chi \epsilon^{-1/3}\kk^{-2/3}
     & \text{for} \quad  \kgup \le \kk \le \keps \\
                                  \chi ({\nu}/{\epsilon})^{1/2}
     & \text{for} \quad  \keps \le \kk \lesssim \kbetap
     \;.
\end{cases}
\end{align*}
We then have
\begin{equation}\label{q:ktht3d}
   \ktheta^2 \sim \frac{1}{a+b}
\end{equation}
where
\begin{equation}
   a = \kbetap^{-4/3}\Pr^{-1/3}\bigl(\kgup^{-2/3}-\keps^{-2/3}\bigr)
	\aand
	b = \kbetap^{-2} \ln(\Pr).
\end{equation}
If, moreover, $\kg\sim\kz$ and $\kfdn\sim\kfup$, along with
\begin{equation}
  \Pr \gtrsim G^{(3r-4)/(8-4r)},
\end{equation}
then
\begin{equation}\label{bigPrkth3D}
  \ktheta^2   \sim   \kbetap^r \kz^{2-r}/\ln(\kbetap/\keps)
	\quad \text{for} \quad  4/3 \le r < 2.
\end{equation}
\end{thm}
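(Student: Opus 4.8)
The plan is to mirror the proof of Theorem~\ref{2DPrext} line by line, splitting the total tracer variance $\bft_{\kz,\infty}$ across the two tracer ranges and applying Lemma~\ref{sumlem} to each. First I would treat the Corrsin--Obukhov range $[\kgup,\keps]$, where $\bft_{\kk,2\kk}\sim\chi\epsilon^{-1/3}\kk^{-2/3}$ is a power law with exponent $p=2/3$; since $4\kg\le\keps$ is assumed, \eqref{sum1} gives $\bft_{\kgup,\keps}\sim\chi\epsilon^{-1/3}(\kgup^{-2/3}-\keps^{-2/3})$. Writing $\chi\epsilon^{-1/3}=(\chi/\mu)(\mu^3/\epsilon)^{1/3}$ and inserting the Batchelor relation \eqref{q:kbeta3d} in the form $\kbetap^4=\epsilon/(\nu\mu^2)$, one checks directly that $(\mu^3/\epsilon)^{1/3}=\kbetap^{-4/3}\Pr^{-1/3}$, so this contribution is exactly $(\chi/\mu)\,a$. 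Note that, unlike the 2D case, no detour through $\ksig$ (and hence no logarithm) enters here. On the viscous--convective range $[\keps,\kbetap]$ the spectrum is flat ($p=0$), so \eqref{sum2} gives $\bft_{\keps,\kbetap}\sim\chi(\nu/\epsilon)^{1/2}\ln(\kbetap/\keps)$; using $\kbetap=\Pr^{1/2}\keps$ one has $\ln(\kbetap/\keps)=\tfrac12\ln\Pr$, while $\chi(\nu/\epsilon)^{1/2}=(\chi/\mu)(\mu^2\nu/\epsilon)^{1/2}=(\chi/\mu)\kbetap^{-2}$, so this contribution is $(\chi/\mu)\,b$.

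Combining these through the concentration hypothesis $\lgl|\theta|^2\rgl\sim\lgl|\theta_{\kgup,\kbetap}|^2\rgl$ gives $\bft_{\kz,\infty}\sim\bft_{\kgup,\keps}+\bft_{\keps,\kbetap}\sim(\chi/\mu)(a+b)$, and the definition \eqref{q:ktausigtht} of $\ktheta$ then yields
\[
   \ktheta^2=\frac{(\mu/L^3)\lgl|\nabla\theta|^2\rgl}{(\mu/L^3)\lgl|\theta|^2\rgl}=\frac{\chi}{\mu\,\bft_{\kz,\infty}}\sim\frac1{a+b},
\]
which is \eqref{q:ktht3d}. One minor point to dispatch: $\Pr>2$ only guarantees $\kbetap>\keps$, whereas Lemma~\ref{sumlem} nominally asks for $\kbetap\ge4\keps$; since the flat-spectrum partial sum is comparable to $\ln\Pr$ regardless, the estimate for $b$ survives.

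For the second part I would proceed exactly as in \eqref{wantPr}--\eqref{q:bigPr}. Under $\kg\sim\kz$ (together with $\keps\gtrsim\kz$, which follows from $4\kg\le\keps$) the bracket collapses to $\kgup^{-2/3}-\keps^{-2/3}\sim\kz^{-2/3}$, so $a\sim\kbetap^{-4/3}\Pr^{-1/3}\kz^{-2/3}$. Because $b=\kbetap^{-2}\ln\Pr\sim\kbetap^{-2}\ln(\kbetap/\keps)$ is subdominant for $r<2$ (as $(\kbetap/\kz)^{2-r}\gg1$), it suffices to majorise $a\lesssim\kbetap^{-r}\kz^{r-2}\ln(\kbetap/\keps)$, which rearranges to $(\kbetap/\kz)^{r-4/3}\lesssim\Pr^{1/3}\ln(\kbetap/\keps)$. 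I would then insert $\kbetap=\Pr^{1/2}\keps$ and the upper bound $\keps\lesssim\kz\,G^{3/8}$ from \eqref{sharp3D}---valid under the assumed \eqref{ktaubig}, with the $\kfup/\kz$ factor dropped since $\kfdn\sim\kfup\gtrsim\kz$---to bound the left side by $\Pr^{(r-4/3)/2}G^{3(r-4/3)/8}$. Absorbing the logarithm (which is $\gtrsim1$ for $\Pr>2$), the required inequality reduces to $G^{3(r-4/3)/8}\lesssim\Pr^{1-r/2}$, that is $\Pr\gtrsim G^{(3r-4)/(8-4r)}$, precisely the stated hypothesis, which yields \eqref{bigPrkth3D}.

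The main obstacle is the same as in 2D and is more interpretive than computational: the majorisation of $a$ rigorously delivers only the lower bound $\ktheta^2\gtrsim\kbetap^r\kz^{2-r}/\ln(\kbetap/\keps)$, i.e.\ it certifies how far the tracer cascade \emph{extends}. The two-sided $\sim$ is to be read in the threshold sense---$a\sim\kbetap^{-r}\kz^{r-2}\ln(\kbetap/\keps)$ holds when $\Pr\sim G^{(3r-4)/(8-4r)}$, and the ceiling $\ktheta^2\le1/b\sim\kbetap^2/\ln\Pr$ caps the growth at $r=2$. The only genuinely delicate calculation is keeping the competing powers of $\Pr$ clean---the $\Pr^{1/2}$ hidden inside $\kbetap$ against the explicit $\Pr^{-1/3}$ in $a$---so that the reduction lands exactly on the exponent $(3r-4)/(8-4r)$ rather than a nearby value.
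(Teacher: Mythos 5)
Your proposal is correct and follows essentially the same route as the paper: Lemma~\ref{sumlem} applied to each of the two tracer ranges, the identities $(\mu^3/\epsilon)^{1/3}=\kbetap^{-4/3}\Pr^{-1/3}$ and $(\nu\mu^2/\epsilon)^{1/2}=\kbetap^{-2}$ with $\ln(\kbetap/\keps)=\tfrac12\ln\Pr$, then the rearrangement $(\keps/\kz)^{r-4/3}\lesssim\Pr^{1-r/2}\ln\Pr$ bounded via \eqref{sharp3D} to land on $\Pr\gtrsim G^{(3r-4)/(8-4r)}$. Your added remarks (the $4\keps\le\kbetap$ issue versus $\Pr>2$, the subdominance of $b$, and the one-sided reading of the final $\sim$) are careful observations that the paper leaves implicit, but they do not change the argument.
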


\begin{proof}
As in the 2D case, we first compute
\begin{align*}
   \bft_{\kgup,\keps} &\sim\frac{\chi}{\mu}\Bigl(\frac{\mu^3}{\epsilon}\Bigr)^{1/3}\bigl(\kgup^{-2/3}-\keps^{-2/3}\bigr)
	= \frac{\chi}{\mu}\kbetap^{-4/3}\Pr^{-1/3}\bigl(\kgup^{-2/3}-\keps^{-2/3}\bigr)\\
\bft_{\keps,\kbetap} &\sim \frac{\chi}{\mu} \Bigl(\frac{\nu\mu^2}{\epsilon}\Bigr)^{1/2} \ln(\kbetap/\keps)
	\sim \frac{\chi}{\mu}\kbetap^{-2} \ln(\Pr).
\end{align*}
By hypothesis, $\lgl|\theta|^2\rgl\sim\bft_{\kg,\keps}+\bft_{\keps,\kbeta}$, giving us \eqref{q:ktht3d}
\begin{equation}
   \ktheta^2 = \frac{\chi}{\mu\bft_{\kz,\infty}}
	= \frac1{a+b}.
\end{equation}

For the second part of the theorem, we note that
\[
   a \lesssim \kbeta^{-r}\kz^{r-2}\ln(\Pr)
\]
is equivalent to
\begin{align*}
   &(\kbetap/\kz)^{r-4/3}\bigl[(\kz/\kg)^{2/3}-(\kz/\kfup)^{2/3}\bigr]
	\lesssim \Pr^{1/3}\log(\Pr)\\
   \Leftrightarrow\qquad
   &(\keps/\kz)^{r-4/3}\lesssim \Pr^{1-r/2}\log(\Pr).
\end{align*}
Arguing as in the 2D case, we bound the lhs by the upper bound in \eqref{sharp3D} and using $\log(\Pr)>1$ on the rhs, this is implied by
\[
   G^{(3r-4)/8} \lesssim \Pr^{1-r/2},
\]
which gives us \eqref{bigPrkth3D}.
\end{proof}

\begin{obs}\normalfont
The decay rate of the energy spectrum in the $(\kg,\keps)$-inertial range is not crucial here.
It is the prefactor in the tracer spectrum that produces the helpful Schmidt number effect in the estimate in Theorem \ref{3DbigPr}.
In fact, we would achieve the same estimate for $\ktheta$ if we consider a dimensionally correct energy spectrum with a different decay rate
\[
   \cE_{\text{3D}}(\kk)\sim \epsilon^{2/3} \kz^{p-5/3}\kk^{-p}  \quad \text{ for any} \quad p \in (1,3).
\]
Note that this would violate Kolmogorov's assumption that $\cE_{\text{3D}}$ depend on only $\epsilon$ and $\kk$, as it would now also depend on $L$.
Nevertheless, an energy spectrum of this form would result in a tracer spectrum \cite[(8.94)]{Vallis}
\[
   \cT(\kk)\sim\chi\epsilon^{-1/3}\kz^q\kk^{q'-1}
	\qquad\text{with } q=(p-3)/2 \text{ and } q'=(5-3p)/6,
\]
corresponding to a discrete dyadic tracer spectrum
\[
   \bft_{\kk,2\kk}\sim \chi\epsilon^{-1/3}\kz^{q'}\kk^q \quad \text{for} \quad  \kg \le \kk \le \keps.
\]
Assuming again, that $\kz \sim \kg \ll \keps$, we have
\begin{align*}
   \bft_{\kg,\keps} \sim \chi \epsilon^{-1/3}\kz^{q'}\left(\kg^q-\keps^q\right)
	&\sim \chi\epsilon^{-1/3} \kz^{-2/3}\\
	&= \frac{\chi}{\mu}\Bigl(\frac{\nu\mu^2}{\epsilon}\Bigr)^{1/3}\left(\frac{\mu}{\nu}\right)^{1/3} \kz^{-2/3} \\
	&= \frac{\chi}{\mu}\kbetap^{-4/3}\Sc^{-1/3} \kz^{-2/3}\;.
\end{align*}
The rest of the estimate for $\ktheta$ follows as in the proof of Theorem \ref{3DbigPr}.
\end{obs}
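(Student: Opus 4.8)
The plan is to re-run the heuristic of Section~\ref{connect} with the generalized energy spectrum in place of the Kolmogorov law, and then to check that the single quantity feeding into Theorem~\ref{3DbigPr} --- the summed inertial-range contribution $\bft_{\kg,\keps}$ --- emerges identical (up to constants) to the one used there, so that the entire estimate is inherited. First I would substitute $\cE(\kk)\sim K\kk^{-n}$ with $n=p$ and $K=\epsilon^{2/3}\kz^{p-5/3}$ into the tracer relation \eqref{specT} (equivalently, feed this spectrum through \eqref{totem}--\eqref{spec3}). Since $K^{-1/2}=\epsilon^{-1/3}\kz^{(5-3p)/6}=\epsilon^{-1/3}\kz^{q'}$ and $(n-5)/2=(p-5)/2=q-1$, the resulting tracer spectrum has dyadic block $\bft_{\kk,2\kk}\sim\cT(\kk)\,\kk$ equal to
\[
   \bft_{\kk,2\kk}\sim\chi\epsilon^{-1/3}\kz^{q'}\kk^{q},
	\qquad q=\tfrac{p-3}2,\quad q'=\tfrac{5-3p}6,
\]
for $\kg\le\kk\le\keps$, recovering the discrete spectrum asserted in the remark.

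Next I would sum over the inertial range with Lemma~\ref{sumlem}. The hypothesis $p\in(1,3)$ is exactly what forces the dyadic exponent $-q=(3-p)/2$ to lie in $(0,1)$, so \eqref{sum1} applies and yields
\[
   \bft_{\kg,\keps}\sim\chi\epsilon^{-1/3}\kz^{q'}\bigl(\kg^{q}-\keps^{q}\bigr).
\]
The heart of the remark is now a short algebraic observation: since $q<0$, under $\kg\sim\kz\ll\keps$ the low-wavenumber endpoint dominates, $\kg^{q}-\keps^{q}\sim\kz^{q}$, while the two $p$-dependent exponents collapse to a $p$-independent constant,
\[
   q+q'=\tfrac{p-3}2+\tfrac{5-3p}6=-\tfrac23.
\]
Hence $\bft_{\kg,\keps}\sim\chi\epsilon^{-1/3}\kz^{-2/3}$, with no trace of the decay rate $p$ --- this is precisely the assertion that ``the decay rate is not crucial''.

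Finally I would translate this back into the variables of Theorem~\ref{3DbigPr}. Using $\kbetap=(\epsilon/(\nu\mu^2))^{1/4}$ from \eqref{q:kbeta3d} and $\Pr=\nu/\mu$, one verifies
\[
   \frac\chi\mu\,\kbetap^{-4/3}\Pr^{-1/3}
	=\frac\chi\mu\Bigl(\frac{\nu\mu^2}\epsilon\Bigr)^{1/3}\Bigl(\frac\mu\nu\Bigr)^{1/3}
	=\frac\chi\mu\Bigl(\frac{\mu^3}\epsilon\Bigr)^{1/3}=\chi\epsilon^{-1/3},
\]
so $\bft_{\kg,\keps}\sim\frac\chi\mu\kbetap^{-4/3}\Pr^{-1/3}\kz^{-2/3}$, which is exactly the $a$-contribution $\frac\chi\mu a$ of Theorem~\ref{3DbigPr} under $\kg\sim\kz\ll\keps$. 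The outer range $\keps\le\kk\lesssim\kbetap$ is governed by the viscous-diffusive prefactor $\chi(\nu/\epsilon)^{1/2}$ of \eqref{spec55-3d} and is untouched by the change of inertial slope, so the $b$ term is identical; thus $\bft_{\kz,\infty}\sim\frac\chi\mu(a+b)$ and the whole estimate \eqref{q:ktht3d}--\eqref{bigPrkth3D} carries over verbatim. I expect the only subtle point to be the exact cancellation $q+q'=-2/3$: it is this algebraic coincidence, together with $q<0$ selecting the low-wavenumber endpoint, rather than any turbulence input, that renders the estimate robust against the choice of $p$, and one should confirm it is precisely this mechanism that produced the $\kz^{-2/3}$ --- equivalently $a\sim\kbetap^{-4/3}\Pr^{-1/3}\kz^{-2/3}$ --- in the original proof.
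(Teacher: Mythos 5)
Your proposal is correct and follows essentially the same route as the paper's own argument: derive the dyadic tracer spectrum from the generalized energy spectrum via \eqref{specT}, sum the inertial range with Lemma \ref{sumlem}, use $q<0$ (so the low endpoint dominates under $\kg\sim\kz\ll\keps$) together with the cancellation $q+q'=-2/3$ to get $\bft_{\kg,\keps}\sim\chi\epsilon^{-1/3}\kz^{-2/3}=\frac{\chi}{\mu}\kbetap^{-4/3}\Sc^{-1/3}\kz^{-2/3}$, identify this with the $a$-term of Theorem \ref{3DbigPr}, and note the $b$-term is untouched. As a small bonus, your derivation $\cT(\kk)\sim\chi\epsilon^{-1/3}\kz^{q'}\kk^{q-1}$ (forced by consistency with the discrete formula $\bft_{\kk,2\kk}\sim\cT(\kk)\,\kk$, which you and the paper both use) reveals that the exponents $q$ and $q'$ are transposed in the paper's printed continuous spectrum --- a typo your blind computation silently corrects.
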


\subsection{Moderate Schmidt number case, 3D}

If in 3D, $\Sc \sim 1$, we have just the single, steeper tracer spectrum, and $\ktheta^2 \sim 1/a$ with $a$ as in Theorem \ref{3DbigPr}.
This can be expressed as $\ktheta\sim \kbetap^{2/3}\kz^{1/3}$, which gives the same fractional power for the tracer cascade range width as for the energy cascade in Proposition \ref{3Dconnect}.

\nocite{davidson:t}

% \mnote{References need to be revised.}
% \bibliographystyle{siam}
% \bibliography{Tracer,all}

\end{document}